\newtheorem{theorem}{Theorem}[section]
\newtheorem{lemma}[theorem]{Lemma}
\newtheorem{remark}[theorem]{Remark}
\newtheorem{example}[theorem]{Example}
\newtheorem{question}{Problem} 
\newcommand{\Mod}[1]{\ (\mathrm{mod}\ #1)}
\newcommand\mult{\operatorname{\textup{{\fontfamily{ptm}\selectfont mult}}}}
\newcommand\dg{\operatorname{\textup{{\fontfamily{ptm}\selectfont deg}}}}
\newcommand\frc{\operatorname{\textup{{\fontfamily{ptm}\selectfont frac}}}}
\newcommand{\cc}{\mathcal}
\newcommand\rounddown[1]{\left\lfloor#1\right\rfloor}
      \def\@setcopyright{}
      \def\serieslogo@{}
\begin{document}
   \author{Amin Bahmanian}
   \address{Department of Mathematics,
  Illinois State University, Normal, IL USA 61790-4520}

   \author{Sadegheh Haghshenas}
   \address{Department of Pathology and Laboratory Medicine,
  Western University, London, ON, Canada, N6A 5C1}
  \email{shaghsh@uwo.ca}

\title[On Regular Set Systems Containing Regular Subsystems]{On Regular Set Systems Containing Regular Subsystems}
   \begin{abstract}  
Let $X,Y$ be finite sets, $r,s,h, \lambda \in \mathbb{N}$ with $s\geq r, X\subsetneq Y$. By $\lambda \binom{X}{h}$ we mean the collection of all $h$-subsets of $X$ where each subset occurs $\lambda$ times. 
 A coloring of $\lambda\binom{X}{h}$ is {\it $r$-regular} if in every color class each element of $X$ occurs $r$ times. A one-regular color class is a {\it perfect matching}. We are interested in the necessary and sufficient conditions under which an $r$-regular coloring of $\lambda \binom{X}{h}$ can be embedded into an $s$-regular coloring of $\lambda \binom{Y}{h}$. 
Using algebraic techniques involving glueing together orbits of a suitably chosen cyclic group, the first author and Newman (Combinatorica 38 (2018), no. 6, 1309--1335) solved the case when $\lambda=1,r=s, \gcd (|X|,|Y|,h)=\gcd(|Y|,h)$. Using purely  combinatorial techniques, we nearly settle the case $h=4$. Two major challenges include finding all the necessary conditions, and obtaining the exact bound for $|Y|$.

It is  worth noting that completing partial symmetric latin squares is closely related to the case $\lambda =r=s=1, h=2$ which was solved by Cruse (J. Comb. Theory Ser. A 16 (1974), 18--22).

   \end{abstract}
   \subjclass[2010]{05C70, 05C65, 05C15}
   \keywords{embedding, factorization, edge-coloring, decomposition, Baranyai's theorem, amalgamation, detachment}
   \date{\today}

   \maketitle   
 
\section{Introduction} 
Consider the following $7 \times 18$ array containing   all the 126 4-subsets of $\{1,2,\dots, 9\}$. 

\begin{table}[h!]{\small \setlength\tabcolsep{2pt}
\begin{tabular}{cccccccccc|cccccccc}
\multicolumn{3}{l}{1235\hspace{1pt} 1246\hspace{1pt} 3456}\vline & 1268 & 1367 & 1457 & 2348 & 2378 & 4578 & 5678 & 1289 & 1489 & 1679 & 2349 & 2569 & 3459 & 3789 & 5679 \\
\multicolumn{3}{l}{1234\hspace{1pt} 1356\hspace{1pt} 2456}\vline & 1348 & 1357 & 1678 & 2467 & 2478 & 2568 & 3578 & 1469 & 1479 & 1569 & 2359 & 2689 & 2789 & 3479 & 3589 \\
\multicolumn{3}{l}{1245\hspace{1pt} 1346\hspace{1pt} 2356}\vline & 1368 & 1468 & 1478 & 2357 & 2567 & 2578 & 3478 & 1239 & 1259 & 1349 & 2489 & 3579 & 4679 & 5689 & 6789 \\
\multicolumn{3}{l}{1256\hspace{1pt} 1345\hspace{1pt} 2346}\vline & 1278 & 1347 & 1568 & 2358 & 2457 & 3678 & 4678 & 1459 & 1589 & 1789 & 2369 & 2479 & 2579 & 3469 & 3689 \\
\multicolumn{3}{l}{1236\hspace{1pt} 1456\hspace{1pt} 2345} \vline & 1247 & 1378 & 1578 & 2368 & 2678 & 3457 & 4568 & 1359 & 1379 & 1579 & 2389 & 2469 & 2679 & 4589 & 4689 \\
\cline{1-3}
1237 & 1248 & 1257 & 1467 & 1567 & 2367 & 2458 & 3458 & 3468 & 3568 & 1269 & 1369 & 1389 & 2379 & 2459 & 4569 & 4789 & 5789 \\
1238 & 1258 & 1267 & 1358 & 1458 & 2347 & 2468 & 3467 & 3567 & 4567 & 1249 & 1279 & 1689 & 2589 & 3489 & 3569 & 3679 & 4579 \\
\end{tabular}
}
\end{table}

In each row of this array, every element of $\{1,\dots, 9\}$ appears exactly eight times.  The $7 \times 10$ left subarray contains  all the 70 4-subsets of $\{1,\dots, 8\}$, and in each   row of this subarray  every element of $\{1,\dots, 8\}$ appears exactly five times. But there is more! The $5 \times 3$  top left subarray contains  all the 15 4-subsets of $\{1,\dots, 6\}$, and in each   row of this small subarray  every element of $\{1,\dots, 6\}$ appears exactly twice. Figure \ref{figcompl} provides a visual representation of this example, where colored 4-cycles correspond to  4-subsets. The goal of this paper is to construct such highly regular set systems.  First, we need to provide some background. 

Let $\cc G:= \lambda K_m^h$ be the collection of all $h$-subsets of a set $[m]:=\{1,\dots,m\}$ of {\it vertices} where each subset occurs $\lambda$ times. A {\it $q$-coloring} of $\cc G$ is a mapping $f:\cc G\rightarrow [q]$; elements of a color $j\in [q]$, written $\cc G(j)$,   form the {\it color class $j$} and the {\it order} of $\cc G(j)$ is $|\bigcup_{e\in \cc G(j)} e |$.  A coloring is {\it $r$-regular} if in each color class the {\it degree} (i.e. the number of occurrences) of each vertex  is $r$.  The problem of finding regular colorings of $\lambda K_m^h$ dates back to the  18th century. Two celebrated examples are the Sylvester's problem that asks for a one-regular coloring of $K_m^h$ in which each color class is of order $m$, and Steiner's problem that asks for an $\binom{r-1}{h-1}$-regular coloring of $K_m^h$ in which each color class is of order $r$. Sylvester's problem was settled in the 70s by Baranyai \cite{MR0416986}, and Steiner's problem was solved very recently for large $m$ by Keevash \cite{2014arXiv1401.3665K} and Glock et al. \cite{glock2016existence}. 

An {\it $r$-factorization} of $\lambda K_m^h$ is an $r$-regular coloring in which each color class is of order $m$.  In this paper, we are concerned with the following embedding analogue of regular colorings of $\lambda K_m^h$ which is closely related to Cameron's problem (see \cite[Question 1.2]{MR0419245}.  
\begin{question} \label{embrsprob1}
Find all values of $s$ and $n$ such that the  given $r$-factorization of $\cc G:=\lambda K_m^h$ can be extended to an $s$-factorization of $\lambda K_n^h$. 
\end{question}
Previously,  Problem \ref{embrsprob1} was solved for the following cases: $\lambda=r=s=1, h=2$ \cite{MR0329925} (this case is  closely related to completing partial symmetric latin squares), $\lambda=1, h=2$ \cite{MR1315436},  $\lambda=r=s=1$ \cite{MR1249714}, $\lambda=1, h=3$ \cite{MR3512664}, and $\lambda=1, r=s, \gcd (m,n,h)=\gcd(n,h)$ \cite{MR3910877}. The major obstacle for the case where $h\geq 3$ stems from the natural difficulty of generalizing graph theoretic results to hypergraphs. In this paper we nearly settle Problem \ref{embrsprob1} for the case when $h=4$.  
\begin{theorem} \label{embdrsh4compthmp1}
If $r\binom{n-1}{3}>s\binom{m-1}{3}$, then an $r$-factorization of $\lambda K_m^4$ can be extended to an $s$-factorization  of $\lambda K_n^4$ if and only if
\begin{align} \label{divcond+}
    &&
    4 \mid rm,
    &&
    4 \mid sn,
    &&
    r \mid \lambda\binom{m-1}{3},
    &&
    s \mid \lambda\binom{n-1}{3},
    &&
   1 \leq \dfrac{s}{r}\leq\binom{n-1}{3}/\binom{m-1}{3};
    &&
  \end{align}
\begin{align}\label{lboundn}
  n\geq \begin{cases} 
      2m & {\text{if}}\ s=r, \\
     \dfrac{4m}{3} & \text{if}\ s>r
   \end{cases};
\end{align}
\begin{align} \label{longnec0}
(n-m)\binom{m}{3}\geq \left(m-\dfrac{n}{2}\right)\left[\binom{n-1}{3}-\frac{s}{r}\binom{m-1}{3}\right];
\end{align}
\begin{align} \label{longnec1}
2(n-m)\binom{m}{3}+\binom{m}{2}\binom{n-m}{2}\geq \Big(m-\dfrac{n}{4}\Big)\left[\binom{n-1}{3}-\frac{s}{r}\binom{m-1}{3}\right].
\end{align}
\end{theorem}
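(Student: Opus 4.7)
\emph{Necessity.} The divisibility conditions follow by standard double counting: each factor is a 4-uniform regular subhypergraph, so $4\mid rm$ and $4\mid sn$; each vertex of $X$ (resp.\ $[n]$) has total degree $\lambda\binom{m-1}{3}$ (resp.\ $\lambda\binom{n-1}{3}$), forcing $r\mid \lambda\binom{m-1}{3}$ and $s\mid \lambda\binom{n-1}{3}$. The ratio bound $s/r\le \binom{n-1}{3}/\binom{m-1}{3}$ says that the number of original factors $\lambda\binom{m-1}{3}/r$ does not exceed the number of target factors $\lambda\binom{n-1}{3}/s$, and $s/r\ge 1$ is forced since each $X$-vertex has degree $r$ in its original factor and $s$ in the extended one. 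For the remaining conditions I classify each edge $e$ of $\lambda K_n^4$ by its intersection type $|e\cap X|\in\{0,1,2,3,4\}$, and compare the total supply of each type with what each of the $\lambda\binom{n-1}{3}/s$ target factors must contain. Each of the $\lambda\binom{m-1}{3}/r$ extended factors already contains the $rm/4$ type-$4$ edges of its original $r$-factor, which bounds its capacity to absorb further edges of each type; matching supply and demand for type-$4$ edges outside the original factors yields (\ref{longnec0}), extending the comparison to type-$3$ edges yields (\ref{longnec1}), and the case $s=r$ of (\ref{lboundn}) follows since extended factors can then add only type-$0$ edges, so $\lambda(n-m)\binom{m-1}{3}/4\le \lambda\binom{n-m}{4}$, giving $n\ge 2m$; a sharper analogue handles $s>r$.

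\emph{Sufficiency.} Assuming all the conditions hold, I would follow the amalgamation--detachment paradigm that has proved effective in the earlier cases $h\in\{2,3\}$. First, I amalgamate $Y\setminus X$ into a single super-vertex $\alpha$ of weight $n-m$; each edge of $\lambda K_n^4$ then becomes an edge of a multi-hypergraph $\cc H$ on $X\cup\{\alpha\}$ in which $\alpha$ may occupy several slots. Next, I construct an $s$-regular coloring of $\cc H$ (with $\alpha$ of the appropriate weighted degree in each color class) that extends the given $r$-factorization of $\lambda K_m^4$: the first $\lambda\binom{m-1}{3}/r$ color classes of $\cc H$ inherit the original $r$-factors on their $X$-edges, while the remaining color classes are unconstrained on $X$. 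The conditions (\ref{divcond+})--(\ref{longnec1}) are precisely the bookkeeping needed for this edge-type distribution to be realized. Finally, I invoke a 4-uniform detachment theorem (in the Nash-Williams--Hilton--Bahmanian tradition) to split $\alpha$ into $n-m$ genuine vertices, producing an $s$-factorization of $\lambda K_n^4$ that agrees with the given $r$-factorization on $X$.

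The main obstacle is the second step---constructing the amalgamated coloring. In the 4-uniform setting there are \emph{five} edge types ($|e\cap X|\in\{0,1,2,3,4\}$) rather than the three or four arising when $h\in\{2,3\}$, so the Hall- or flow-type feasibility argument that distributes edges among the color classes must juggle an extra dimension. The inequalities (\ref{longnec0}) and (\ref{longnec1}) should emerge as the precise tight constraints that unlock this distribution, so verifying they suffice will likely require careful case analysis, splitting according to whether $s=r$ or $s>r$ and how close $n$ is to the bound (\ref{lboundn}). By contrast, the detachment step should be a direct application of a sufficiently general detachment theorem for edge-colored 4-uniform hypergraphs, once the amalgamated coloring is verified to satisfy the requisite balancedness hypotheses; the fact that the theorem is stated as only ``nearly'' settling the problem suggests that some boundary regime of these inequalities may resist the general detachment step and require ad hoc treatment.
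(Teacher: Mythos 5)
Both your sketch and the paper work within the amalgamation--detachment paradigm, but the specific amalgamation you propose is importantly different from the paper's, and that difference bears most of the technical weight. You amalgamate only the new vertices $Y\setminus X$ into a single vertex $\alpha$, keeping all $m$ old vertices distinct, so your amalgamated hypergraph $\cc H$ lives on $m+1$ vertices and must carry the prescribed $r$-factorization of $\lambda K_m^4$ through the coloring step. The paper instead sets the already-colored $\lambda K_m^4$ entirely aside, restricts attention to the uncolored part $\cc G=\lambda K_n^4\setminus\lambda K_m^4$, and amalgamates \emph{both} sides: $X$ to a single vertex $u$, $Y\setminus X$ to a single vertex $v$, yielding the two-vertex hypergraph $\cc F$ of \eqref{def2vhyp}. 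Any coloring of $\cc F$ satisfying \eqref{degj} detaches (Lemma~\ref{f}) to a coloring of $\cc G$ in which each old vertex acquires degree $s-r$ in every old color and $s$ in every new color; this combines with \emph{any} given $r$-factorization of $\lambda K_m^4$, so the prescribed factorization never enters the amalgamated problem at all. The two-vertex reduction is the decisive simplification: the feasibility question becomes an integer program in the multiplicities $\mult_j(u^iv^{4-i})$, which Lemma~\ref{m1m2lemma} reduces to choosing $\mult_j(u^3v)$ and $\mult_j(u^2v^2)$ per color, i.e.\ the system \eqref{mastersys}, and Lemma~\ref{averaginglemma} reduces that to the two margin inequalities \eqref{firsteineq}--\eqref{secondfineq}. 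Your $(m+1)$-vertex formulation would instead require satisfying degree constraints at each of the $m$ old vertices simultaneously while extending a prescribed partial $q$-coloring -- a much larger feasibility problem with no comparably clean reduction.

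Two smaller points. Your necessity sketch has the right flavor but misdescribes the counts behind \eqref{longnec0}--\eqref{longnec1}: the paper obtains them by counting, within each \emph{new} color class, edges meeting exactly one (resp.\ one or two) new vertices, from the identities $sm=3e_j+2f_j+g_j$ and $sn/4=e_j+f_j+g_j+\ell_j$, which yield $sm-sn/2\le e_j$ and $sm-sn/4\le 2e_j+f_j$ and are then summed over $\kappa_2$; this is not what your phrase ``type-$4$/type-$3$ edges outside the original factors'' describes. Your $s=r$ derivation of \eqref{lboundn} does match the paper's. Finally, you correctly anticipate that \eqref{longnec0}--\eqref{longnec1} are the constraints that ``unlock the distribution'' and that a case analysis is unavoidable, but this is exactly where the bulk of the paper's work lies: the two margin inequalities interact (a choice of $\mult_j(u^3v)$ feasible for \eqref{firsteineq} can render \eqref{secondfineq} infeasible -- see the $m=6,n=9,r=2,s=4$ example in Section~\ref{col2sec}), forcing the six-way case split of Table~\ref{casestable} on the signs of $\iota_1,\iota_2,\rho'_1,\rho'_2$, supported by the inequalities of Section~\ref{ineqsec} and a separate table of small exceptional parameter values (Table~\ref{smallcasestable}). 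Acknowledging that obstacle is not the same as navigating it.
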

\begin{theorem} \label{embdrsh4compthmp2}
If $n\geq 4m/3$ and $r\binom{n-1}{3}=s\binom{m-1}{3}$, then an $r$-factorization of $\lambda K_m^4$ can be extended to an $s$-factorization  of $\lambda K_n^4$ if and only if \eqref{divcond+} holds.
\end{theorem}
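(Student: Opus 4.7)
\medskip

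\noindent\textbf{Proof proposal for Theorem \ref{embdrsh4compthmp2}.}

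The necessity of \eqref{divcond+} is standard: $4\mid rm$ and $4\mid sn$ come from double-counting edges in a single 4-uniform factor; $r\mid \lambda\binom{m-1}{3}$ and $s\mid \lambda\binom{n-1}{3}$ follow from the fact that the total degree at every vertex of $\lambda K_m^4$, respectively $\lambda K_n^4$, must be partitioned into factors of uniform regularity; and the inequalities $1\leq s/r\leq\binom{n-1}{3}/\binom{m-1}{3}$ record both $s\geq r$ (we are extending) and the comparison of the number of color classes, which in the present balanced setting is in fact an equality.

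For sufficiency, set $p:=\lambda\binom{m-1}{3}/r=\lambda\binom{n-1}{3}/s$, so the given $r$-factorization $F_1,\dots,F_p$ of $\lambda K_m^4$ and the target $s$-factorization both have exactly $p$ classes; consequently each $F_i$ must be enlarged by appending edges of $\lambda K_n^4\setminus\lambda K_m^4$, and no fresh factors are created. The plan is to employ amalgamation-detachment. Contract $V:=[n]\setminus[m]$ to a single super-vertex $\alpha$ of multiplicity $n-m$, turning $\lambda K_n^4$ into an amalgamated multi-hypergraph $\mathcal{H}^*$ on $[m]\cup\{\alpha\}$ whose edges are $4$-multisets containing $j$ copies of $\alpha$ for $0\leq j\leq 4$. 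The edges with $j=0$ are precisely those of $\lambda K_m^4$; for $1\leq j\leq 4$, the amalgamated edges carry multiplicity $\lambda\binom{n-m}{j}$ on each supporting $(4-j)$-subset of $[m]$.

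The task is to extend the coloring of the $j=0$ edges to a coloring of all of $\mathcal{H}^*$ using the same $p$ colors so that, in every color class: (i) each vertex $v\in[m]$ gains exactly $s-r$ degree from the new edges; (ii) vertex $\alpha$ accumulates total degree $s(n-m)$ counted with multiplicities; and (iii) a natural profile condition ensuring detachability (in the spirit of the existing amalgamation/detachment framework) holds for the $j=1,2,3,4$ edge-counts. Once such a coloring is in hand, a hypergraph detachment theorem splits $\alpha$ back into $n-m$ singletons and returns the desired $s$-factorization of $\lambda K_n^4$ extending the original $F_1,\dots,F_p$.

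The main obstacle is the construction of the amalgamated coloring. Because the hypothesis $r\binom{n-1}{3}=s\binom{m-1}{3}$ forces the global identity $\sum_{j=1}^{4}j\binom{m}{4-j}\lambda\binom{n-m}{j}=ps(n-m)$ (and the analogous vertex-level totals at each $v\in[m]$) to be exact equalities rather than inequalities, both \eqref{longnec0} and \eqref{longnec1} are rendered trivial in this regime, and feasibility reduces to a balanced allocation problem across the $p$ classes. I would carry out the allocation layer by layer: first partition the $j=4$ edges (an equitable Baranyai-type split of $\lambda\binom{n-m}{4}$ loops at $\alpha$), then the $j=3,2,1$ edges in turn, at each stage using the slack afforded by $n\geq 4m/3$ to correct any remaining imbalance at vertices of $[m]$ and to maintain the profile condition. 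The combinatorial core of the argument is thus a careful bookkeeping that the boundary $n=4m/3$ is tight for $j=1$ edges, after which detachment closes the proof.
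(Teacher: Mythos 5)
Your proposal correctly identifies the high-level framework (amalgamation followed by detachment) and correctly observes that the hypothesis $r\binom{n-1}{3}=s\binom{m-1}{3}$ trivializes conditions \eqref{longnec0} and \eqref{longnec1} (their right-hand sides vanish) and makes the number of color classes on the two sides coincide. You also note, in passing, a cosmetic difference from the paper: you amalgamate only the new vertices into a super-vertex $\alpha$ while keeping $[m]$ intact, whereas the paper's Lemma~\ref{f} amalgamates \emph{both} the $m$ old vertices (into $u$) and the $n-m$ new vertices (into $v$), reducing the whole problem to coloring a two-vertex multi-hypergraph $\mathcal F$. The two-vertex reduction is what makes the allocation problem tractable: one has to control degrees at just $u$ and $v$ per color class, and the interplay with the pre-existing $r$-factorization enters only through a shift of $rm$ in the degree of $u$ on old colors. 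Your version, by contrast, must control degree gains of exactly $s-r$ at each of the $m$ individual old vertices per color, which is a substantially harder allocation.

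The genuine gap is that the allocation itself is never carried out. You write that ``feasibility reduces to a balanced allocation problem across the $p$ classes'' and propose to handle it ``layer by layer'' with ``the slack afforded by $n\geq 4m/3$'' correcting imbalances, but this is precisely where all of the work in the paper resides. Even in the balanced case $k=q$ (i.e.\ $\kappa_2=\varnothing$), the paper must: (i) introduce the parameters $\iota_1,\rho_1,\rho_1',\iota_{1j},\rho_{1j}$ and reduce feasibility to the integer-programming system \eqref{mastersys}; (ii) verify, via Lemma~\ref{averaginglemma}, that this system has a solution exactly when the boundary inequalities \eqref{firsteineq} and \eqref{secondfineq} hold; (iii) prove those boundary inequalities, which requires Lemmas~\ref{iota12leq}, \ref{lemu3vrhs}, \ref{iotaijlem}, \ref{rhoijlem}, and \ref{M2Case3.3.3k-q+M2Case3.3.3k}, each a nontrivial polynomial estimate in $m,n,r,s$; and (iv) check a list of small exceptional parameter sets by hand (Table~\ref{smallcasestable}), where the asymptotic bounds fail. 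In particular, the claim that ``the boundary $n=4m/3$ is tight for $j=1$ edges, after which detachment closes the proof'' is not substantiated: the bound $n\geq 4m/3$ enters through (N5)/(N4) and is used repeatedly in Section~\ref{ineqsec} in ways your sketch does not reproduce. Likewise, your global-degree identity $\sum_{j=1}^4 j\binom{m}{4-j}\lambda\binom{n-m}{j}=ps(n-m)$ is just the handshaking lemma in disguise and holds regardless of the balance hypothesis; it guarantees the \emph{total} degree of $\alpha$ is right, but does nothing toward equidistributing it over the $p$ classes while simultaneously satisfying the per-old-vertex constraints and the integrality requirements at every layer. Without Lemma~\ref{m1m2lemma} (which shows that coloring the $u^3v$- and $u^2v^2$-edges suffices) and the subsequent case analysis, the argument is a plan rather than a proof.
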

%divisiblity barriers (arithmetic obstructions), space barriers (mwtric obstructions)

We note that none of the previous partial solutions to Problem \ref{embrsprob1} considered $\lambda>1$. Two major challenges in proving Theorems \ref{embdrsh4compthmp1} and \ref{embdrsh4compthmp2}  include finding all the necessary conditions (see Section \ref{neccsec}), and obtaining the exact bound for $n$. %Previously, it was known that for $n\geq 4m$ ans $\lambda=1$, \eqref{divcond+} is necessary and sufficient \cite{newpapersubmittedtojgt}. 
Judging from the literature  on embedding results for other combinatorial structures such as latin squares and Steiner triple systems, finding a sharp lower bound for $n$ in general is quite difficult. A noteworthy example is Lindner's conjecture \cite{MR0460213} that any partial Steiner triple system of order $m$ can be embedded in a Steiner triple system of order $n$ if $n \equiv 1,3 \Mod 6$ and $n \geq 2m + 1$, which despite numerous attempts, took over thirty years  to be resolved  \cite{MR2475426}. 

%Some notes about the proof techniques and the significance of our result: 
The layout of the paper is  as follows.  In Section \ref{neccsec}, we discuss the necessary conditions. In Section \ref{proofsketchsec} we provide the sketch of the proof of sufficiency.  Coloring the edges is divided into two parts, and the easier part is provided in  Section  \ref{col1sec}. Several crucial inequalities  will be needed before we are able to complete the coloring in Section \ref{col2sec}, and those are proven in Section \ref{ineqsec}. 

We end this section with some notation. A {\it hypergraph} $\mathcal G$ is a pair $(V(\mathcal G),E(\mathcal G))$ where $V(\mathcal G)$ is a finite set called the {\it vertex} set, $E(\mathcal G)$ is the {\it edge} multiset, where every edge is itself a multi-subset of $V(\mathcal G)$. This means that not only can an edge  occur multiple times in $E(\mathcal G)$, but also each vertex can have multiple occurrences within an edge. The total number of occurrences of a  vertex $v$ among all edges of $E(\mathcal G)$ is called the {\it degree}, $\dg_{\mathcal G}(v)$ of $v$ in $\mathcal G$. For two hypergraphs $\cc G$ and $\cc F$, $\cc G\backslash \cc F$ is the hypergraph whose vertex set is $V(\cc G)$ and whose edge set is $E(\cc G)\backslash E(\cc F)$. 
%\begin{center}
  % \includegraphics[scale=0.7]{Example_picture_no extensions.pdf}
%\end{center}
%\begin{center}
%   \includegraphics[scale=0.7]{Example_picture_first extension.pdf}
%\end{center}
\begin{figure}[h!]
   \includegraphics[width=1\textwidth, angle=0]{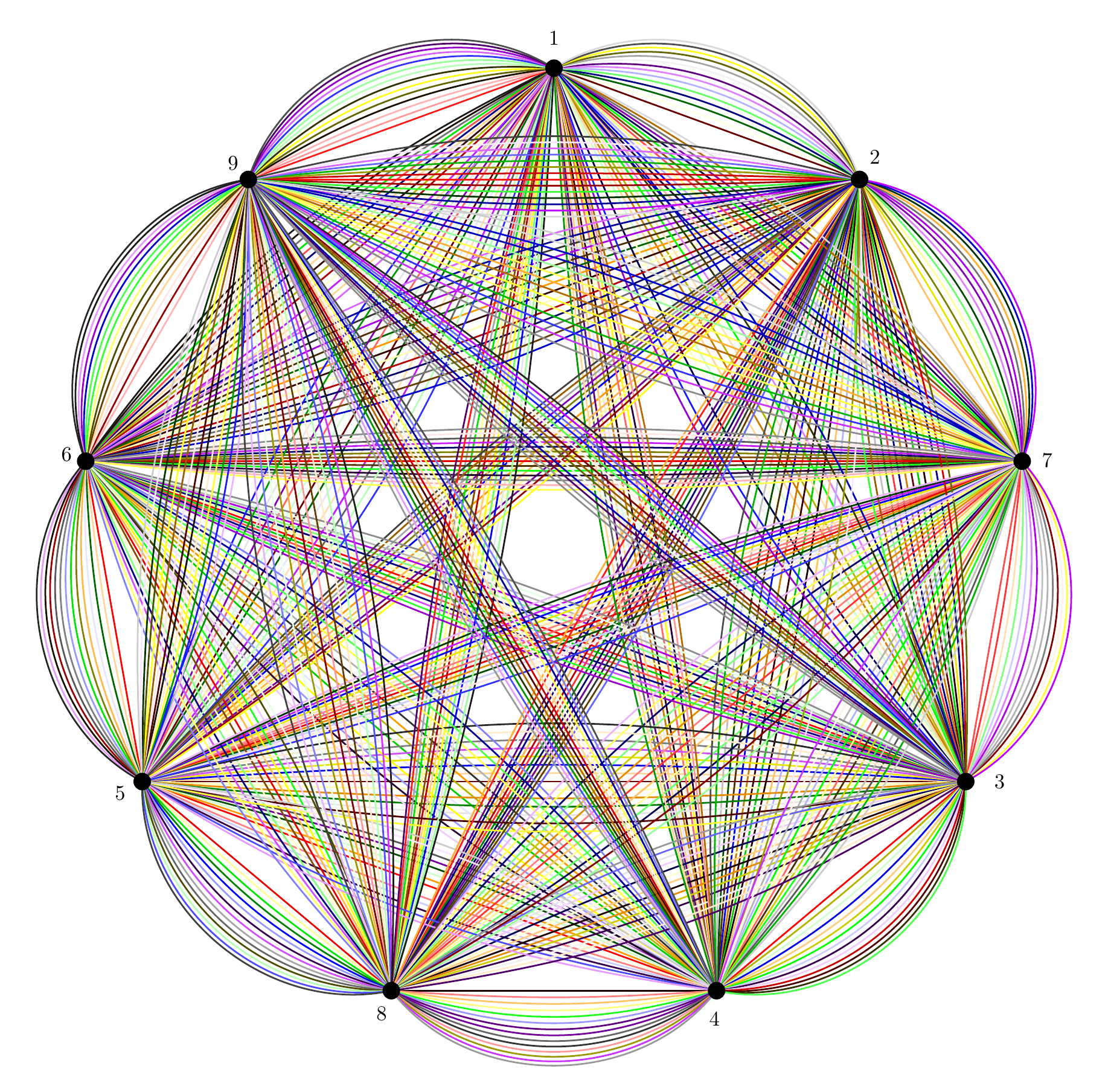}
  \caption{A 2-factorization of $K_6^4$ embedded into a 5-factorization of $K_8^4$ which is embedded into an 8-factorization of $K_9^4$}
\label{figcompl}
\end{figure}

\section{Necessary Conditions} \label{neccsec}
In order to avoid  trivial cases, we shall make the following assumptions: (i)  $n>m\geq 4$, (ii) if $m=4$, then $\lambda\geq 2$ and $r\geq 2$.

The following lemma will be used without further explanation when required.
\begin{lemma} \label{doublcount}
For $m,n\in \mathbb{N}$ with $n>m$, we have
\begin{enumerate}[label=\textup{({\alph*})}] 
\item  $\binom{n}{4}=\binom{m}{4}+(n-m)\binom{m}{3}+\binom{m}{2}\binom{n-m}{2}+m\binom{n-m}{3}+\binom{n-m}{4},$
\item $\binom{n-1}{3}=\binom{m-1}{3}+(n-m)\binom{m-1}{2}+(m-1)\binom{n-m}{2}+\binom{n-m}{3}$, and 
\item $m\left[\binom{n-1}{3}-\binom{m-1}{3}\right]=3(n-m)\binom{m}{3}+2\binom{m}{2}\binom{n-m}{2}+m\binom{n-m}{3}.$
\end{enumerate}
\end{lemma}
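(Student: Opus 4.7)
The plan is to prove each identity by a standard double-counting argument, based on a bipartition of the underlying vertex set. Fix disjoint sets $X$ and $Z$ with $|X|=m$ and $|Z|=n-m$. For part (a), I would count the $4$-subsets of $X\cup Z$ in two ways: direct enumeration gives $\binom{n}{4}$, while classifying each such subset by the number $k\in\{0,1,2,3,4\}$ of its elements lying in $X$ gives $\sum_{k=0}^{4}\binom{m}{k}\binom{n-m}{4-k}$, whose five terms are exactly those on the right-hand side of (a), in the stated order (with $k=4$ yielding $\binom{m}{4}$, $k=3$ yielding $(n-m)\binom{m}{3}$, and so on down to $k=0$ yielding $\binom{n-m}{4}$). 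Part (b) is handled identically, now counting $3$-subsets of a set of size $n-1$ partitioned into pieces of sizes $m-1$ and $n-m$ and summing $\binom{m-1}{k}\binom{n-m}{3-k}$ over $k\in\{0,1,2,3\}$.

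For part (c), my plan is a second double count: I would count incidences $(x,e)$ with $x\in X$ and $e$ an edge of $K_n^4\setminus K_m^4$ containing $x$. Summing over $x$ first yields $m\bigl[\binom{n-1}{3}-\binom{m-1}{3}\bigr]$, since each vertex of $X$ has degree $\binom{n-1}{3}-\binom{m-1}{3}$ in $K_n^4\setminus K_m^4$. Summing over edges first, each edge $e$ of $K_n^4\setminus K_m^4$ satisfies $|e\cap X|\in\{0,1,2,3\}$, because $|e\cap X|=4$ would force $e\in K_m^4$, and it contributes $|e\cap X|$ to the total; tallying by $|e\cap X|=3,2,1$ produces precisely the three summands $3(n-m)\binom{m}{3}$, $2\binom{m}{2}\binom{n-m}{2}$, and $m\binom{n-m}{3}$.

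As a sanity check, (c) can also be derived algebraically from (b) by multiplying through by $m$, subtracting $m\binom{m-1}{3}$ from both sides, and applying the elementary identities $m\binom{m-1}{2}=3\binom{m}{3}$ and $m(m-1)=2\binom{m}{2}$. There is no substantive obstacle here: the identities are routine. I would nevertheless favor the double-counting presentation because the same case analysis, namely edges of $K_n^4\setminus K_m^4$ classified by how many of their vertices lie in the ``old'' set $X$, recurs throughout the paper when partial colorings are to be extended.
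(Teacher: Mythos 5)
Your proof is correct, and it uses exactly the double-counting approach the paper indicates: the authors omit the proof, remarking only that it follows from ``a simple double counting argument,'' which is precisely what you supply for all three parts. Your alternative algebraic derivation of (c) from (b) via $m\binom{m-1}{2}=3\binom{m}{3}$ and $m(m-1)=2\binom{m}{2}$ is also sound.
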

The proof is based on a simple double counting argument, and we shall skip it here. 

A triple $(m,r,\lambda)$ is {\it admissible} if $4 \mid rm$ and $r \mid \lambda\binom{m-1}{3}$. The following lemma settles the necessary conditions. 
\begin{lemma} If an $r$-factorization of $\lambda K_m^4$ can be extended to an $s$-factorization of $\lambda K_n^4$, then the following conditions hold.
\begin{enumerate}[label=\textup{(N{{\arabic*}})}] 
\item The triples $(m,r,\lambda)$ and $(n,s,\lambda)$ are admissible;
\item $1 \leq s/r\leq\binom{n-1}{3}/\binom{m-1}{3}$;
\item If $s=r$, then $n\geq 2m$;
\item $n\geq \frac{m}{3}(4-r/s)$;
\item If $1<s/r<\binom{n-1}{3}/\binom{m-1}{3}$, then $n\geq 4m/3$;
\item
$$(n-m)\binom{m}{3}\geq \left(m-\dfrac{n}{2}\right)\left[\binom{n-1}{3}-\frac{s}{r}\binom{m-1}{3}\right];$$
\item 
\begin{align*} 
2(n-m)\binom{m}{3}+\binom{m}{2}\binom{n-m}{2}\geq \left(m-\dfrac{n}{4}\right)\left[\binom{n-1}{3}-\frac{s}{r}\binom{m-1}{3}\right];
\end{align*}
\item 
If $s/r=\binom{n-1}{3}/\binom{m-1}{3}$, then
\begin{align*} 
  \frac{1}{s}\binom{n-1}{3}\leq \begin{cases} 
      \dbinom{m}{2}\dbinom{n-m}{2}+m\dbinom{n-m}{3} & {\text{if}}\ m(s-r)\equiv 1\Mod{3}, \\
      \dbinom{m}{2}\dbinom{n-m}{2}+\dfrac{m}{2}\dbinom{n-m}{3} & \text{if}\ m(s-r)\equiv 2\Mod{3}.
   \end{cases}
\end{align*}
\end{enumerate}
\end{lemma}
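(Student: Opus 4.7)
The plan is to classify every edge of $\lambda K_n^4$ by how many of its vertices lie in $[n]\setminus[m]$: call these Types A, B, C, D, E for $0,1,2,3,4$ outside vertices respectively. Since the $s$-factorization of $\lambda K_n^4$ restricted to $[m]$ recovers the given $r$-factorization, each original color becomes an \emph{extended} $s$-factor, and any remaining colors are \emph{new} $s$-factors using no Type A edges. Writing $b',c',d',e'$ (respectively $b,c,d,e$) for the counts of Type B, C, D, E edges in one extended (respectively new) factor, summing the degrees contributed to a fixed vertex of $[m]$ and counting the total edges per factor yield
\begin{align*}
3b' + 2c' + d' &= (s-r)m, & b' + c' + d' + e' &= \tfrac{sn-rm}{4},\\
3b + 2c + d &= sm, & b + c + d + e &= \tfrac{sn}{4}.
\end{align*}
These four local identities are the engine of the whole argument; everything else follows by elementary manipulation and summation over the appropriate collection of factors.

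Conditions (N1) and (N2) are essentially integrality statements, together with the observation that each vertex of $[m]$ has its per-factor degree rise from $r$ to $s$ (forcing $s\geq r$) and that the counts $\lambda\binom{m-1}{3}/r$ and $\lambda\binom{n-1}{3}/s$ of factors on the two sides must satisfy the former $\leq$ the latter. For (N3), when $s=r$ the first identity forces $b'=c'=d'=0$, so the Type E part of every extended factor is itself an $r$-factor of $\lambda K_{n-m}^4$; comparing the degree consumed at each $v\in[n]\setminus[m]$ with the total available $\lambda\binom{n-m-1}{3}$ gives $\binom{m-1}{3}\leq\binom{n-m-1}{3}$, i.e.\ $n\geq 2m$. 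Conditions (N4) and (N5) are immediate from nonnegativity of $b',c',d',e'$: since $3(b'+c'+d'+e')\geq 3b'+2c'+d'$, we get $3(sn-rm)/4\geq(s-r)m$, which rearranges to (N4); the analogous estimate for new factors (which exist exactly when $s/r<\binom{n-1}{3}/\binom{m-1}{3}$) gives $3(sn)/4\geq sm$, i.e.\ (N5), the $s=r$ subcase being absorbed by the stronger (N3).

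For (N6) and (N7), eliminating $d$ and $e$ between the two identities for a new factor yields
\begin{align*}
b &= s(m - n/2) + d + 2e, & 2b + c &= s(m - n/4) + e,
\end{align*}
so $b\geq s(m-n/2)$ and $2b+c\geq s(m-n/4)$. Writing $N:=\binom{n-1}{3}-(s/r)\binom{m-1}{3}$, the number of new factors equals $\lambda N/s$; summing these two lower bounds over the new factors and using $B_{\mathrm{new}}\leq \lambda(n-m)\binom{m}{3}$ and $2B_{\mathrm{new}}+C_{\mathrm{new}}\leq 2\lambda(n-m)\binom{m}{3}+\lambda\binom{m}{2}\binom{n-m}{2}$ (the total Type B and C counts in $\lambda K_n^4$) produces (N6) and (N7) after dividing by $\lambda$.

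Finally, in the boundary case of (N8) there are no new factors, so every factor is extended and $2c'+d'\equiv(s-r)m\pmod 3$. When $(s-r)m\equiv 1\pmod 3$, enumerating the three admissible residue classes of $(c',d')$ modulo 3 shows $c'+d'\geq 1$ per factor; summing over the $\lambda\binom{n-1}{3}/s$ factors with $\sum c'=\lambda\binom{m}{2}\binom{n-m}{2}$ and $\sum d'=\lambda m\binom{n-m}{3}$ yields the first inequality. When $(s-r)m\equiv 2\pmod 3$, the analogous residue check forces $2c'+d'\geq 2$ per factor, and dividing by $2\lambda$ after summation yields the second. The main subtlety throughout is setting up the correct local identities and keeping the extended/new dichotomy straight; once this is done, the rest is routine counting, with the modulo 3 bookkeeping in (N8) being the most delicate step.
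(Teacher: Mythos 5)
Your proof is correct and takes essentially the same approach as the paper: classify edges of each color class by the number of new vertices they contain, derive the degree-sum and edge-count identities per color, and sum over the relevant color classes. The notational bookkeeping differs slightly (the paper writes $a_j,e_j,f_j,g_j,\ell_j$ where you write $a,b,c,d,e$ for new-factor counts and primed versions for extended factors), and your (N3) argument compares the per-factor degree at a new vertex with the total available degree $\lambda\binom{n-m-1}{3}$ where the paper compares per-factor Type E edge counts against $\lambda\binom{n-m}{4}$, but both reduce to $\binom{n-m-1}{3}\geq\binom{m-1}{3}$, and the identities you eliminate to obtain (N4)--(N8) are the same ones the paper manipulates.
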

\begin{proof}
Suppose  that an $r$-factorization of $\lambda K_m^4$ is extended to an $s$-factorization of $\lambda K_n^4$. Since $\lambda K_m^4$ is $r$-factorable, $r$ divides the degree of each vertex in $\lambda K_m^4$. The existence of an $r$-factor in $\lambda K_m^4$ implies that $4 \mid rm$. Thus, $(m,r,\lambda)$ is admissible. A similar argument shows that  $(n,s,\lambda)$ is also admissible. Consequently,  $q:=\lambda \binom{m-1}{3}/r$ and $k:=\lambda \binom{n-1}{3}/s$ are integers. 

In order to extend an $r$-factorization of $\lambda K_m^4$ to an $s$-factorization of $\lambda K_n^4$, we must clearly have $s\geq r$. Moreover, the number of colors used in an $r$-factorization of $\lambda K_m^4$, $q$, is no more than the number of colors used in an $s$-factorization of $\lambda K_n^4$, $k$, and so (N2) holds.

For the rest of the proof, we shall refer to the $m$ vertices of $\lambda K_m^4\subseteq \lambda K_n^4$, and the remaining $n-m$ vertices of $\lambda K_n^4\backslash \lambda K_m^4$ as the old vertices, and the new vertices, respectively. Moreover, $\kappa:=\kappa_1\cup \kappa_2$ where $\kappa_1:=\{1,\dots,q\}$ is the set of the old colors (those used in the coloring of $\lambda K_m^4$), and $\kappa_2:=\{q+1,\dots,k\}$ is the set of the new colors (those used only in the coloring of $\lambda K_n^4\backslash \lambda K_m^4$). For $j\in \kappa$,  let $a_j,e_j,f_j, g_j$, and $\ell_j$ be the number of edges colored $j$ in $\lambda K_n^4$ that are incident with exactly $0,1,2,3$, and 4 new vertices, respectively.

To prove (N3) suppose that $s=r$, and let  $j\in \kappa_1$. We cannot have any edges colored $j$ between the old vertices and the new vertices. Therefore, to form an $r$-factor, we can only use the new edges, and so $n-m\geq 4$. In order to form an $r$-factor in $\lambda K_n^4$ with color $j$, we need to have  $r(n-m)/4$ edges colored $j$ between the new vertices. Hence, 
$$\lambda \binom{n-m}{4}\geq q\frac{r(n-m)}{4}.$$
This implies $\frac{4}{n-m}\binom{n-m}{4}\geq \binom{m-1}{3}$. Therefore, $\binom{n-m-1}{3}\geq\binom{m-1}{3}$, or equivalently, $n-m-1\geq m-1$, and so (N3) is satisfied.

In an $s$-factorization of $\lambda K_n^h$, each of the  new vertices   is adjacent with exactly $s$ edges of each color, so all the  new vertices are adjacent with at most $s(n-m)$ edges of each color. We have $s(n-m)+a_j\geq sn/4$ for $j\in \kappa$. Since $a_j=rm/4$ for $j\in \kappa_1$, we have $s(n-m)+rm/4\geq sn/4$, which proves (N4). Moreover, if $k>q$, then $\kappa_2\neq\varnothing$ and since $a_j=0$ for $j\in \kappa_2$,  we have $s(n-m)\geq sn/4$ which  proves (N5).

To prove (N6) and (N7), observe that for $k=q$, the right hand side of (N6)  and (N7) is zero, and  there is nothing to prove. So let us assume that $k>q$. Within each new color class of $\lambda K_n^4$, the degree sum of all the old vertices is $sm$.  Therefore, for any $j\in\kappa_2$,
$$sm=3e_j+2f_j+g_j.$$
Since the number of edges in each new color class of $\lambda K_n^4$ is $sn/4$, we have
$$sn/4=e_j+f_j+g_j+\ell_j.$$
Therefore, 
$$sm-\frac{sn}{4}=2e_j+f_j-\ell_j\leq 2e_j+f_j.$$
By taking the sum over all new colors, we have
$$(k-q)(sm-\frac{sn}{4})\leq 2\lambda(n-m)\binom{m}{3}+\lambda\binom{m}{2}\binom{n-m}{2},$$
which proves (N7). Similarly, we have 
$$sm-\frac{sn}{2}=e_j-g_j-2\ell_j\leq e_j,$$
and so
$$(k-q)(sm-\frac{sn}{2})\leq \lambda(n-m)\binom{m}{3},$$
which proves (N6).

To prove (N8), first we show that
\begin{equation} \label{k=qmsrnz}
\mbox{ If }k=q, \mbox{ and }m(s-r)\nequiv 0\Mod{3}, \mbox{ then }n\geq m+2. 
\end{equation}
Suppose by the contrary that $n=m+1$. 
Since $k=q$, we have $\binom{m}{3}/s=\binom{m-1}{3}/r$, and so $m(s-r)=3s$, which is a contradiction. Within the new edges of $\lambda K_n^4$, the degree sum of all the old vertices in each old color class  is $m(s-r)$.  Therefore, 
$$m(s-r)=3e_j+2f_j+g_j\equiv 2f_j+g_j \Mod 3\mbox { for } j\in \kappa_1.$$
There are two cases to consider. 
\begin{enumerate} [label=({\alph*})]
\item If $m(s-r)\equiv 1\Mod{3}$, then $2f_j+g_j\equiv 1\Mod {3}$, and so $f_j+g_j\geq 1$ for each $j\in \kappa_1$. 
Hence, 
$$\lambda{m \choose2}{n-m\choose2}+\lambda m{n-m\choose3}\geq q.$$
\item If $m(s-r)\equiv 2\Mod{3}$, then $2f_j+g_j\equiv 2\Mod{3}$, and so $2f_j+g_j\geq 2$ for each $j\in \kappa_1$. Hence, 
$$2\lambda {m\choose2}{n-m\choose2}+\lambda m{n-m\choose3}\geq 2q.$$
\end{enumerate}
% a few notes here: (i) if n-m=2, then (N8) says that $k\leq \binom{m}{2}$ (ii) (N8) is actually necessary even if $k>q$. The same argument above works for $k>q$. 
\end{proof}
Although, we will need  the necessary conditions (N4) and (N8),  in the next two lemmas, we show that (N4) and (N8) are redundant. 
\begin{lemma}\label{A7}
(N5) implies (N4).
%For $m\geq5$,
%$$n\geq \frac{m}{3}(4-\frac{r}{s}).$$
\end{lemma}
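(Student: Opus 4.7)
The plan is to handle the three mutually exhaustive subcases of (N2) separately. Condition (N2) says $1\le s/r\le \binom{n-1}{3}/\binom{m-1}{3}$, so I will treat in turn (a) $s=r$, (b) $1<s/r<\binom{n-1}{3}/\binom{m-1}{3}$, and (c) $s/r=\binom{n-1}{3}/\binom{m-1}{3}$.

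Case (a) is immediate: the right side of (N4) becomes $\frac{m}{3}(4-1)=m$, so (N4) reduces to $n\ge m$, which holds by the standing assumption $n>m$. In case (b), condition (N5) directly yields $n\ge 4m/3$, and since $r/s>0$ we have $\frac{m}{3}(4-r/s)<\frac{4m}{3}\le n$, so (N4) follows.

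The real work lies in case (c), where (N5) is vacuous. Here I would substitute $r/s=\binom{m-1}{3}/\binom{n-1}{3}$ into (N4) and clear denominators, reducing (N4) to the equivalent inequality
\[
3n\binom{n-1}{3}+m\binom{m-1}{3}\ge 4m\binom{n-1}{3}.
\]
Using the elementary identities $3n\binom{n-1}{3}=12\binom{n}{4}$ and $m\binom{m-1}{3}=4\binom{m}{4}$, this becomes
\[
3\binom{n}{4}+\binom{m}{4}\ge m\binom{n-1}{3}.
\]
Expanding both sides via Lemma~\ref{doublcount}(a) and (c), the common terms cancel and the residual difference collapses to
\[
\binom{m}{2}\binom{n-m}{2}+2m\binom{n-m}{3}+3\binom{n-m}{4},
\]
which is manifestly non-negative.

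The main obstacle will be case (c), because (N5) gives no information precisely at this boundary. The key observation that unlocks it is the reformulation of (N4) in terms of $\binom{n}{4}$ and $\binom{m}{4}$; once that is spotted, Lemma~\ref{doublcount} turns the problem into a transparent sum of non-negative terms, and the proof is complete.
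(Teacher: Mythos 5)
Your proof is correct and takes essentially the same approach as the paper: the crux is the $k=q$ (equivalently $s/r=\binom{n-1}{3}/\binom{m-1}{3}$) case, where both you and the paper reduce (N4) to $3\binom{n}{4}+\binom{m}{4}\geq m\binom{n-1}{3}$ and apply Lemma~\ref{doublcount} to rewrite the difference as $\binom{m}{2}\binom{n-m}{2}+2m\binom{n-m}{3}+3\binom{n-m}{4}\geq0$. One small point in your favor: you separate off $s=r$ explicitly, whereas the paper's phrase ``if $k>q$, then by (N5), $n\geq 4m/3$'' glosses over the fact that (N5) is vacuous when $s=r$ (though (N4) is then trivially $n\geq m$ anyway, so no harm results); you also stop at the manifestly nonnegative sum rather than expanding further as the paper does.
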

\begin{proof}
If $k>q$, then by (N5), $n\geq 4m/3$ and so (N4) clearly holds. Let us assume that $k=q$. We need to show that $n\geq \frac{m}{3}\left(4-\left[\binom{m-1}{3}/\binom{n-1}{3}\right]\right)$, or equivalently, 
$$3n\binom{n-1}{3}-4m\binom{n-1}{3}+m\binom{m-1}{3}\geq0.$$
Since $n\geq m+1$ and $m\geq 4$, we have
\begin{align*}
&9n\binom{n-1}{3}-12m\binom{n-1}{3}+3m\binom{m-1}{3}\\
=&36\binom{n}{4}-12m\binom{n-1}{3}+12\binom{m}{4}\\
%&=36\left[{m\choose4}+{m\choose3}(n-m)+{m\choose2}{n-m\choose2}+m{n-m\choose3}+{n-m\choose4}\right]\\
%&\ \ \ -12m\left[{m-1\choose3}+{m-1\choose2}(n-m)+(m-1){n-m\choose2}+{n-m\choose3}\right]+12{m\choose4}\\
=&12\left[\binom{m}{2}\binom{n-m}{2}+2m\binom{n-m}{3}+3\binom{n-m}{4}\right]\\
=&\binom{n-m}{2}\left( n(3n+2m-15) + m^2-7m+18\right)\\
\geq & \binom{n-m}{2}\left(3(m+1)+2m-15\right)\geq 0.
\end{align*}
\end{proof}

\begin{lemma}
If $m,n,r,s\in \mathbb{N}$ such that $n>m, s>r$, and $s/r=\binom{n-1}{3}/\binom{m-1}{3}$, then
\begin{align*} 
  \frac{1}{s}\binom{n-1}{3}\leq \begin{cases} 
      \dbinom{m}{2}\dbinom{n-m}{2}+m\dbinom{n-m}{3} & {\text{if}}\ m(s-r)\equiv 1\Mod{3}, \\
      \dbinom{m}{2}\dbinom{n-m}{2}+\dfrac{m}{2}\dbinom{n-m}{3} & \text{if}\ m(s-r)\equiv 2\Mod{3}.
   \end{cases}
\end{align*}
\end{lemma}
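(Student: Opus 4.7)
The plan is to set $d := n-m$ and, via the identity of Lemma~\ref{doublcount}(c), reduce the desired inequality to a lower bound on $m(s-r)$. First I would show $d \geq 2$: if $n = m+1$, then $s/r = \binom{m}{3}/\binom{m-1}{3} = m/(m-3)$ forces $m(s-r) = 3s \equiv 0 \pmod{3}$, contradicting either branch of the case split. Setting $\alpha := \binom{n-1}{3}/s = \binom{m-1}{3}/r$, Lemma~\ref{doublcount}(c) gives
\[
m(s-r)\,\alpha \;=\; 3d\binom{m}{3} + 2\binom{m}{2}\binom{d}{2} + m\binom{d}{3},
\]
so bounding $\alpha$ from above is equivalent to bounding $m(s-r)$ from below. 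Because the case~(a) right-hand side dominates the case~(b) right-hand side (with equality at $d=2$), it suffices to prove the case~(b) inequality in both settings.

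For $d \geq 3$ I would use only the trivial estimate $s - r \geq 1$, so $m(s-r) \geq m$; dividing the identity by $m$ then yields $\alpha \leq d\binom{m-1}{2} + (m-1)\binom{d}{2} + \binom{d}{3}$. Using $\binom{m}{2} - (m-1) = \binom{m-1}{2}$ and $\binom{d}{2} - d = d(d-3)/2 \geq 0$ for $d \geq 3$, a direct rearrangement shows this bound is at most $\binom{m}{2}\binom{d}{2} + \tfrac{m}{2}\binom{d}{3}$, as the remainder $\binom{m-1}{2}\bigl(\binom{d}{2} - d\bigr) + \tfrac{m-2}{2}\binom{d}{3}$ is manifestly non-negative.

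The hard part is the boundary case $d = 2$, where $\binom{n-1}{3} - \binom{m-1}{3} = (m-1)^2$ and the estimate above gives only $\alpha \leq (m-1)^2$, exceeding the target $\binom{m}{2}$. The main obstacle is upgrading to $s - r \geq 2$, for which I would use an integrality/gcd argument. From $(s-r)\binom{m-1}{3} = r(m-1)^2$ and $g := \gcd\bigl(\binom{m-1}{3},(m-1)^2\bigr)$, writing $\binom{m-1}{3} = ga$ and $(m-1)^2 = gb$ with $\gcd(a,b) = 1$ forces $a \mid r$, and hence $s - r \geq b = (m-1)^2/g$. It therefore suffices to show $g \leq (m-1)^2/2$ whenever $m \geq 4$: if instead $g = (m-1)^2$, then $(m-1)^2 \mid \binom{m-1}{3}$, so $6(m-1) \mid (m-2)(m-3)$, and in particular $m-1 \mid (m-2)(m-3)$; but $(m-2)(m-3) \equiv 2 \pmod{m-1}$, forcing $m-1 \leq 2$ and hence $m \leq 3$, a contradiction. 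With $s - r \geq 2$, so $m(s-r) \geq 2m$, the identity produces $\alpha \leq m(m-1)^2/(2m) = (m-1)^2/2 < \binom{m}{2}$, as required.
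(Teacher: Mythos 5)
Your proof is correct, and it follows the same high-level skeleton as the paper's (establish $n-m\geq 2$, expand $\binom{n-1}{3}-\binom{m-1}{3}$ via Lemma~\ref{doublcount}, handle $n-m\geq 3$ with the trivial bound $s-r\geq 1$, and handle $n-m=2$ by upgrading to $s-r\geq 2$). But two of its ingredients are genuinely different and worth noting. First, you observe that the case-(a) target dominates the case-(b) target (strictly for $d\geq 3$, equality for $d=2$), so you only ever prove the tighter bound; the paper instead runs the algebra separately for $m(s-r)\equiv 1$ and $\equiv 2\pmod 3$, using the congruence to lower-bound $m(s-r)-1$ or $m(s-r)-2$ in each branch. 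Your unification is a clean simplification and makes the role of the congruence more transparent (it is used only to rule out $d=1$). Second, and more substantively, your proof that $s-r\geq 2$ when $n=m+2$ is completely different from the paper's: the paper assumes $s=r+1$, derives the quadratic $m^2-(6r+5)m+6(r+1)=0$, and shows $36r^2+36r+1$ is never a perfect square; you instead work from $(s-r)\binom{m-1}{3}=r(m-1)^2$, factor out $g=\gcd\bigl(\binom{m-1}{3},(m-1)^2\bigr)$, and use coprimality to get $s-r\geq (m-1)^2/g$, then show $g\neq(m-1)^2$ by the congruence $(m-2)(m-3)\equiv 2\pmod{m-1}$. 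Since a proper divisor of $(m-1)^2$ is automatically at most $(m-1)^2/2$, this gives $s-r\geq 2$. Your divisibility argument is arguably more illuminating: it yields the stronger quantitative bound $s-r\geq(m-1)^2/\gcd\bigl(\binom{m-1}{3},(m-1)^2\bigr)$ rather than just ruling out $s-r=1$, and it avoids any perfect-square analysis. One small expository gap: you should state explicitly that $g\mid(m-1)^2$ and $g<(m-1)^2$ together force $g\leq(m-1)^2/2$ — this is the step that lets you reduce to ruling out $g=(m-1)^2$.
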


\begin{proof}
By \eqref{k=qmsrnz}, $n\geq m+2$. We show that
\begin{equation} \label{k=qnm+2sr}
\mbox{ If }k=q, \mbox{ and }n=m+2, \mbox{ then }s\geq r+2. 
\end{equation}
Suppose on the contrary that $k=q, n=m+2$, but $s=r+1$. Since $s/r=\binom{n-1}{3}/\binom{m-1}{3}$, we have $s(m-2)(m-3)=rm(m+1)$, or equivalently, $m^2-(6r+5)m+6(r+1)=0$. This implies that $m=\left(6r+5\pm\sqrt{36r^2+36r+1}\right)/2$. Since $(6r+2)^2< 36r^2+36r+1<(6r+3)^2$, $36r^2+36r+1$ is not a perfect square, but $m$ is an integer, this is a contradiction.

Since $s/r=\binom{n-1}{3}/\binom{m-1}{3}$, we have
\begin{align*}
\frac{s}{r}\binom{m-1}{3}=&\binom{n-1}{3}\\
=&\binom{m-1}{3}+(n-m)\binom{m-1}{2}+(m-1)\binom{n-m}{2}+\binom{n-m}{3}.
\end{align*}
Therefore, 
$$\binom{m-1}{3}=\frac{r}{s-r}\left[(n-m)\binom{m-1}{2}+(m-1)\binom{n-m}{2}+\binom{n-m}{3}\right].$$
To complete the proof, there are two cases to consider.
\begin{enumerate}[label=({\roman*})] 
\item If $m(s-r)\equiv 1\Mod{3}$,  we need to show that 
\begin{align*}
rm\binom{n-m}{3}+&r\binom{m}{2}\binom{n-m}{2} \\ 
\  &\geq\frac{r}{s-r}\left[(n-m)\binom{m-1}{2}+(m-1)\binom{n-m}{2}+\binom{n-m}{3}\right],
\end{align*}
or, equivalently $$[m(s-r)-1]\binom{n-m}{3}+\left[m(s-r)-2\right]\frac{m-1}{2}\binom{n-m}{2}\geq (n-m)\binom{m-1}{2}.$$
If $n-m=2$, then $s-r\geq 2$, and so it is enough to show that $(2m-2)\frac{m-1}{2}\geq 2\binom{m-1}{2}$, which is clearly true. If $n-m\geq3$, since $s-r\geq1$, it is enough to show that $(m-1)\binom{n-m}{3}+\binom{m-1}{2}\binom{n-m}{2}\geq (n-m)\binom{m-1}{2}$, or equivalently, 
$$\binom{n-m}{2}\left[(m-1)(n-m-2)+3\binom{m-1}{2}\right]\geq 3(n-m)\binom{m-1}{2}.$$
Since $n-m\geq3$, we have $\binom{n-m}{2}\geq n-m$. Therefore, it is enough to show that $$(m-1)(n-m-2)+3\binom{m-1}{2}\geq 3\binom{m-1}{2},$$
which clearly holds.

\item If $m(s-r)\equiv 2\Mod{3}$,  we need to show that 
$$
[m(s-r)-2]\binom{n-m}{2}\left(\frac{n+2m-5}{3}\right)\geq 2(n-m)\binom{m-1}{2}.
$$
If $n-m=2$, then $s-r\geq 2$, and so it is enough to show that $(2m-2)\frac{3m-3}{3}\geq 4\binom{m-1}{2}$, which is clearly true.
If $n-m\geq 3$, then we have $\binom{n-m}{2}\geq n-m$. Therefore, since $s-r\geq 1$, it suffices to show that $(n+2m-5)/3\geq m-1$. This is equivalent to $n-m\geq 2$, which is trivial.
\end{enumerate}
\end{proof}

\begin{remark}\textup{
Neither  Condition \eqref{longnec0} nor Condition \eqref{longnec1}  can  be eliminated from Theorem \ref{embdrsh4compthmp1}. 
 For example, a $4$-factorization of $K_7^4$ cannot be extended to a 6-factorization of $K_{10}^4$, and a $6$-factorization of $K_{10}^4$ cannot be extended to a 7-factorization of $K_{16}^4$. In the first example all conditions but  \eqref{longnec0} hold, and in the second example all conditions but  \eqref{longnec1} hold.
}\end{remark}

\section{Sketch of Proof} \label{proofsketchsec}
Throughout the rest of this paper, we shall assume that the conditions \eqref{divcond+}--\eqref{longnec1} (or equivalently, (N1)--(N8)) hold, $n\geq 4m/3$, and that 
\begin{align*} 
    &&
    q:=\dfrac{\lambda}{r}\binom{m-1}{3}, 
    &&
    k:=\dfrac{\lambda}{s}\binom{n-1}{3}.
    &&
  \end{align*}
Since $(m,r,\lambda)$ and $(n,s, \lambda)$ are admissible, both $q$ and $k$ are integers. Let
\begin{align*} 
    &&
    \kappa_1=\{1,\dots,q\}, 
    &&
    \kappa_2=\{q+1,\dots,k\},
    &&
    \kappa=\kappa_1\cup\kappa_2.
	&&
\end{align*}
Let $\cc F$ be a 2-vertex hypergraph with $V(\cc F)=\{u,v\}$. In order to describe the edge set of $\cc F$, first we need to introduce some notation. The {\it multiplicity} of an edge $e$ in $\cc F$, written $\mult(e)$, is the number of repetitions of $e$ in $\cc F$. A {\it $u^{i}v^{j}$-edge} is an edge in which vertex $u$ occurs $i$ times and vertex $v$ occurs $j$ times. When we color the edges of  $\cc F$,  we use $\dg_j(v)$ and $\mult_j(e)$ for the degree of $v$, and the multiplicity of $e$ in color class $j$. The following describes the edge set of $\cc F$.
 \begin{align}\label{def2vhyp}
%    &&
%    V(\cc F)=\{u,v\}, 
    &&
    \mult(u^iv^{4-i})=\lambda\binom{m}{i}\binom{n-m}{4-i} \quad \mbox{ for } 0\leq i\leq 3.
    &&
  \end{align}
Observe that $\cc F$ can be obtained by identifying all the  $m$ old  vertices of $\cc G:=\lambda K_n^4\backslash \lambda K_m^4$ by a vertex $u$, and identifying all the remaining $n-m$ new vertices with $v$. We say that $\cc F$ is an {\it amalgamation} of $\cc G$, and that $\cc G$ is a {\it detachment} of $\cc F$.  
  
We think of the given $r$-factorization of $\lambda K_m^4$ as a $q$-coloring of $\lambda K_m^4$ in which each color class induces an $r$-factor. In order to extend the $r$-factorization of $\lambda K_m^4$ to an $s$-factorization of $\lambda K_n^4$, we need to color  $\cc G$ with $k$ colors such that each color  class of $\lambda K_n^4$ induces an $s$-factor. If  we can obtain such a coloring, then in the amalgamation $\cc F$ of $\cc G$, $\dg_j(u)= m(s-r)$  for $j\in \kappa_1$, $\dg_j(u)= sm$  for $j\in \kappa_2$, and $\dg_j(v)= s(n-m)$  for $j\in \kappa$.  More importantly, by the following lemma which is an immediate consequence of a result of the first author (see \cite[Theorem 4.1]{MR2942724}),  the converse of the previous statement is also true.   
\begin{lemma}\label{f}
If  the hypergraph $\cc F$ described in \eqref{def2vhyp} can be colored so that 
\begin{equation}\label{degj}
  \dg_j(x)=\begin{cases} 
      m(s-r) & {\text{if}}\ x=u,j\in \kappa_1, \\
      sm & \text{if}\ x=u,j\in \kappa_2, \\
      s(n-m) & \text{if}\ x=v, j\in \kappa,
   \end{cases}
\end{equation}
then an $r$-factorization of $\lambda K_m^4$ can be extended to an $s$-factorization of $\lambda K_n^4$.
\end{lemma}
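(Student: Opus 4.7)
The plan is to invoke the detachment theorem \cite[Theorem 4.1]{MR2942724} applied to the edge-coloring of $\cc F$ guaranteed by the hypothesis. First I would verify that $\cc F$ is exactly the amalgamation of $\cc G := \lambda K_n^4 \setminus \lambda K_m^4$ obtained by identifying the $m$ old vertices to $u$ and the $n-m$ new vertices to $v$. Indeed, a $4$-edge of $\cc G$ comprising $i$ old and $4-i$ new vertices (with $0 \le i \le 3$, since edges entirely among old vertices already live in $\lambda K_m^4$ and so are excluded from $\cc G$) collapses to a $u^i v^{4-i}$-edge, and $\cc G$ contains exactly $\lambda \binom{m}{i}\binom{n-m}{4-i}$ such edges, matching the multiplicities prescribed in \eqref{def2vhyp}.

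Given a $k$-coloring of $\cc F$ satisfying \eqref{degj}, the detachment theorem yields a detachment $\cc G^{*}$ of $\cc F$ obtained by splitting $u$ into $u_1,\dots,u_m$ and $v$ into $v_1,\dots,v_{n-m}$, together with an inherited $k$-coloring of $\cc G^{*}$, such that (a) for each $i$ the $u^iv^{4-i}$-edges of $\cc F$ are distributed as evenly as possible among the $\binom{m}{i}\binom{n-m}{4-i}$ $4$-subsets of corresponding mixed type, and (b) within each color class the degrees at the detached vertices are as equal as possible. Since every multiplicity in \eqref{def2vhyp} is divisible by $\binom{m}{i}\binom{n-m}{4-i}$, assertion (a) forces each such $4$-subset to receive exactly $\lambda$ copies; hence $\cc G^{*} = \cc G$.

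The degree prescriptions in \eqref{degj} are calibrated precisely so that the ``as equal as possible'' splitting in (b) becomes strictly uniform. For $j \in \kappa_1$, the value $m(s-r)$ splits into $s-r$ at every $u_i$, and $s(n-m)$ splits into $s$ at every $v_i$. For $j \in \kappa_2$, the value $sm$ splits into $s$ at every $u_i$ and $s(n-m)$ splits into $s$ at every $v_i$. I would then adjoin to this detached coloring of $\cc G$ the given $r$-factorization of $\lambda K_m^4$ (whose color classes are indexed by $\kappa_1$), which adds $r$ to the degree of each old vertex in each color of $\kappa_1$. The resulting coloring of $\lambda K_n^4$ has every vertex of degree $s$ in every color class: for $j \in \kappa_1$, old vertices get $r+(s-r)=s$ and new vertices get $s$; for $j \in \kappa_2$, every vertex gets $s$. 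This is the required $s$-factorization extending the given one.

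The single nontrivial step is the invocation of \cite[Theorem 4.1]{MR2942724}; once the detached hypergraph is identified with $\cc G$ and the detached degrees computed, the proof is complete. All other work is bookkeeping on how the degrees at $u$ and $v$ split and how they combine with the prescribed $r$-factorization of $\lambda K_m^4$.
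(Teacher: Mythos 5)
Your proposal is correct and takes exactly the same route as the paper, which simply states Lemma~\ref{f} as an immediate consequence of the detachment theorem \cite[Theorem 4.1]{MR2942724} without spelling out the verification. The details you supply---identifying $\cc F$ as the amalgamation of $\cc G$, observing that the edge multiplicities are multiples of the corresponding binomial coefficients so the detached hypergraph is $\cc G$ itself, checking that the degree prescriptions in \eqref{degj} split uniformly, and recombining with the given $r$-factorization---are precisely the bookkeeping the paper leaves implicit.
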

Thus, the problem of  extending an $r$-factorization of $\lambda K_m^4$ to an $s$-factorization of $\lambda K_n^4$ is reduced to coloring of the amalgamation $\cc F$ of $\cc G$. Although it is much easier to color $\cc F$ than to color $\cc G$, it is particularly very difficult to color $\cc F$ when $n$ and $m$ are very close to each other. The rest of the paper is devoted to a coloring of $\cc F$ ensuring that \eqref{degj} is satisfied. As we shall see in the next section, provided  we  carefully  color the $u^3v$-edges  and $u^2v^2$-edges,  coloring the remaining edges of $\cc F$ is straightforward (see Lemma \ref{m1m2lemma}). Before we can color the  $u^3v$-edges  and $u^2v^2$-edges, we need to prove several crucial inequalities. We shall do this in Section \ref{ineqsec} following  the  coloring of the  $u^3v$-edges  and $u^2v^2$-edges in Section \ref{col2sec}.

\section{Colorings I} \label{col1sec}
In this section we will show that if we can color  the  $u^3v$-edges  and $u^2v^2$-edges so that certain conditions are met (see Condition \eqref{M1equ3v}), then it is easy to color the $uv^3$-edges  and $v^4$-edges.  To achieve this goal, let us introduce the following fixed parameters. 
\begin{align*}
\hspace{1cm}
&
 \left \{
  \begin{aligned}
    &\iota_1:=sm-\dfrac{sn}{2}-\dfrac{rm}{2}, \\
&\iota_2:=sm-\dfrac{sn}{2}, &\mbox{ if } \kappa_2\neq \varnothing,\\
   &  \rho_1:=\dfrac{sm}{3}-\dfrac{rm}{3},\\
  &    \rho_2:=\dfrac{sm}{3},&\mbox{ if } \kappa_2\neq \varnothing,\\
   & \rho'_1:=\frac{sm}{2}-\frac{sn}{8}-\frac{3rm}{8},\\
 & \rho'_2:=\frac{sm}{2}-\frac{sn}{8},&\mbox{ if } \kappa_2\neq \varnothing.\\
\end{aligned} \right.
\end{align*}

Since by (N1), $(m,r,\lambda)$ and $(n,s,\lambda)$ are admissible, $\iota_1$ and $\iota_2$ are integers. By (N2), $s\geq r\geq 1$,  and so we have that $\rho_1\geq 0$ and $\rho_2> 0$.  Observe that $\iota_1$ and  $\iota_2$ are not necessarily non-negative. In fact,  $\iota_1\geq0 $ if an only if  $n\leq(2-\frac{r}{s})m$, and   $\iota_2\geq0$ if and only if $n\leq2m$. In addition, $\rho_1, \rho_2,\rho'_1, \rho'_2$ are not necessarily integers.  It is also easy to see that $\rho'_1\geq0$ if and only if $n\leq(4-\frac{3r}{s})m$, and $\rho'_2\geq0$ if and only if $n\leq4m$. By (N4), $\iota_1\leq\rho'_1\leq \rho_1$. Finally,  by (N5) for $k>q$, $n\geq 4m/3$, so we have that $\iota_2\leq\rho'_2\leq \rho_2$ (for  $k>q$).

Once we color the $u^3v$-edges, we can introduce the following further parameters. 
\begin{align*}
\hspace{1cm}
&
 \left \{
  \begin{aligned}
  &  \iota_{1j}:= sm-\frac{sn}{4}-2\mult_j(u^3v)-\frac{3rm}{4}, &j\in \kappa_1\\
   &   \iota_{2j}:=sm-\frac{sn}{4}-2\mult_j(u^3v),&j\in \kappa_2\\
     & \rho_{1j}:=\frac{sm}{2}-\frac{3}{2}\mult_j(u^3v)-\frac{rm}{2},&j\in \kappa_1\\
      & \rho_{2j}:=\frac{sm}{2}-\frac{3}{2}\mult_j(u^3v),&j\in \kappa_2.
  \end{aligned} \right.
\end{align*}

Since  $(m,r,\lambda)$ and $(n,s,\lambda)$ are admissible, $\iota_{1j}\in \mathbb{Z}$ for $j\in \kappa_1$, and $\iota_{2j}\in \mathbb{Z}$ for $j\in \kappa_2$. Observe that $\rho_{1j}$ (for $j\in \kappa_1$), and $\rho_{2j}$ (for $j\in \kappa_2$) are not necessarily integers. Moreover, 
\begin{enumerate} [label=({\roman*})] 
\item for $j\in \kappa_1$,  $\rho_{1j}\geq0$  if and only if $\mult_j(u^3v)\leq \rho_1$,
\item for $j\in \kappa_2$,  $\rho_{2j}\geq0$  if and only if $\mult_j(u^3v)\leq \rho_2$,
\item for $j\in \kappa_1$, $\iota_{1j}\geq 0$   if and only if $\mult_j(u^3v)\leq \rho_1'$,  
\item for $j\in \kappa_2$, $\iota_{2j}\geq 0$  if and only if $\mult_j(u^3v)\leq \rho_2'$,
\item for $j\in \kappa_1$, $\rho_{1j}\geq \iota_{1j}$   if and only if $\mult_j(u^3v)\geq\iota_1$, and 
\item for $j\in \kappa_2$, $\rho_{2j}\geq \iota_{2j}$  if and only if $\mult_j(u^3v)\geq\iota_2$.
\end{enumerate}

We have summarized all the useful information about $\iota_1, \iota_2, \rho_1, \rho_2, \rho_1', \rho_2', \iota_{1j}, \rho_{1j}$ for $j\in \kappa_1$, and $\iota_{2j},\rho_{2j}$ for $j\in \kappa_2$   in Table \ref{formulatable}.
	
\begin{lemma} \label{m1m2lemma}
Suppose that the  $u^3v$-edges and $u^2v^2$-edges of $\cc F$ can be colored such that
\begin{equation}\label{M1equ3v}
\left\{\!\begin{aligned}
 \iota_i&\leq\mult_j(u^3v)\ \leq \rho_i \\
 \iota_{ij}&\leq\mult_j(u^2v^2)\leq \rho_{ij}
\end{aligned}\right\}
\quad j\in \kappa_i, i=1,2.
\end{equation} 
Then the remaining edges of $\cc F$ can be colored so that \eqref{degj} holds. Accordingly, if \eqref{M1equ3v} is satisfied, then  an $r$-factorization of $\lambda K_m^4$ can be extended to an $s$-factorization of $\lambda K_n^4$.
\end{lemma}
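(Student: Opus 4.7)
My plan is that, with the $u^3v$- and $u^2v^2$-edge multiplicities already fixed, the two remaining multiplicities per color are \emph{forced} by the degree requirement \eqref{degj}, and the intervals in \eqref{M1equ3v} are precisely the conditions under which these forced values come out non-negative. Fix $j\in\kappa_i$ (with $i\in\{1,2\}$) and abbreviate $x_j:=\mult_j(u^3v)$, $y_j:=\mult_j(u^2v^2)$, $z_j:=\mult_j(uv^3)$, $w_j:=\mult_j(v^4)$. Writing out the $u$- and $v$-degrees in color class $j$ yields
\begin{align*}
 3x_j+2y_j+z_j &= \dg_j(u),\\
 x_j+2y_j+3z_j+4w_j &= s(n-m),
\end{align*}
with $\dg_j(u)=m(s-r)$ if $i=1$ and $\dg_j(u)=sm$ if $i=2$. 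I would then \emph{define} $z_j:=\dg_j(u)-3x_j-2y_j$ and, after substitution, $4w_j:=sn-4sm+8x_j+4y_j$ (plus $3rm$ in the $\kappa_1$ case).

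Three things need checking. First, non-negativity: unwinding the definitions of $\rho_{ij}$ and $\iota_{ij}$, we have $z_j\ge 0\iff y_j\le\rho_{ij}$ and $w_j\ge0\iff y_j\ge\iota_{ij}$, both guaranteed by \eqref{M1equ3v}. Second, integrality: the hypotheses $4\mid rm$ and $4\mid sn$ from (N1) make $4w_j$ divisible by $4$ in both cases, while $z_j$ is manifestly integral. Third, the aggregate totals must match \eqref{def2vhyp}. Summing $z_j$ over $j\in\kappa$ and using $\sum_j x_j=\lambda(n-m)\binom{m}{3}$ and $\sum_j y_j=\lambda\binom{m}{2}\binom{n-m}{2}$ (which hold automatically for the input coloring) gives
\[
 \sum_{j\in\kappa} z_j \;=\; qm(s-r)+(k-q)sm-3\lambda(n-m)\binom{m}{3}-2\lambda\binom{m}{2}\binom{n-m}{2}.
\]
Substituting $q=\lambda\binom{m-1}{3}/r$ and $k=\lambda\binom{n-1}{3}/s$ collapses the first two terms to $\lambda m\bigl[\binom{n-1}{3}-\binom{m-1}{3}\bigr]$, and Lemma \ref{doublcount}(c) then yields $\sum_j z_j=\lambda m\binom{n-m}{3}$ exactly. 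The parallel computation for $\sum_j w_j$, using parts (a) and (b) of Lemma \ref{doublcount}, produces $\lambda\binom{n-m}{4}$, as required.

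With such a coloring of $\cc F$ realizing \eqref{degj} in hand, Lemma \ref{f} immediately supplies the desired $s$-factorization of $\lambda K_n^4$ extending the given $r$-factorization of $\lambda K_m^4$. I do not anticipate any real obstacle in this lemma: the parameters $\iota_i,\rho_i,\iota_{ij},\rho_{ij}$ were crafted precisely so that once the $u^3v$- and $u^2v^2$-edges have been properly colored, the remaining colors assign themselves by solving a forced $2\times 2$ linear system at each color. The genuine combinatorial difficulty is deferred to Section \ref{col2sec}, where producing $x_j,y_j$'s meeting \eqref{M1equ3v} will rely on the inequalities developed in Section \ref{ineqsec}.
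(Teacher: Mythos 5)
Your proposal is correct and follows essentially the same route as the paper: you define $\mult_j(uv^3)$ and $\mult_j(v^4)$ by forcing the two degree equations at each color, which unwinds to exactly the paper's assignments $\mult_j(uv^3)=2\rho_{ij}-2\mult_j(u^2v^2)$ and $\mult_j(v^4)=\mult_j(u^2v^2)-\iota_{ij}$, and you then verify non-negativity, integrality, and the aggregate totals just as the paper does. The only cosmetic difference is that you phrase the derivation as solving a forced $2\times 2$ linear system rather than writing the closed-form assignments up front; both use Lemma \ref{doublcount} for the totals check and then invoke Lemma \ref{f}.
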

\begin{proof} 
Suppose that the  $u^3v$-edges and $u^2v^2$-edges of $\cc F$ can be colored such that \eqref{M1equ3v} holds. First, we claim that we can color the $uv^3$-edges such that
\begin{equation} \label{uv3eq}
   \mult_j(uv^3) = 2\rho_{ij}-2\mult_j(u^2v^2)\quad j\in \kappa_i, i=1,2.
  \end{equation}
By \eqref{M1equ3v}, $\mult_j(u^2v^2)\leq \rho_{1j}$ for $j\in \kappa_1$, and therefore, $\mult_j(uv^3)\geq 0$ for $j\in \kappa_1$. Likewise, $\mult_j(u^2v^2)\leq \rho_{2j}$ for $j\in \kappa_2$, and so, $\mult_j(uv^3)\geq 0$ for $j\in \kappa_2$. Moreover, $2\rho_{ij}\in \mathbb{Z}$ for $j\in \kappa_i, i=1,2$, and so  $\mult_j(uv^3) \in \mathbb{Z}$ for $j\in \kappa$. 
  Hence, the following confirms that the coloring of the $uv^3$-edges satisfying \eqref{uv3eq} is possible.
\begin{align*}
\sum\nolimits_{j\in\kappa}\mult_j(uv^3)&= \sum\nolimits_{j\in\kappa_1}\mult_j(uv^3)+\sum\nolimits_{j\in\kappa_2}\mult_j(uv^3)\\
    &=qm(s-r)-3\sum\nolimits_{j\in\kappa_1}\mult_j(u^3v)-2\sum\nolimits_{j\in\kappa_1}\mult_j(u^2v^2)\\
& \ \ \ +sm(k-q)-3\sum\nolimits_{j\in\kappa_2}\mult_j(u^3v)-2\sum\nolimits_{j\in\kappa_2}\mult_j(u^2v^2)\\
&=ksm-qrm-3\lambda(n-m)\binom{m}{3}-2\lambda\binom{m}{2}\binom{n-m}{2}\\
&=\lambda m\left[\binom{n-1}{3}-\binom{m-1}{3}\right]-3\lambda(n-m)\binom{m}{3}-2\lambda\binom{m}{2}\binom{n-m}{2}\\
&=\lambda m\binom{n-m}{3}=\mult(uv^3).
\end{align*}
%where the second to last equality is implied by  Lemma \ref{B.Emb.3uniform}.

Now, we claim that we can color the $v^4$-edges such that
\begin{equation} \label{v4eq}
   \mult_j(v^4) = \mult_j(u^2v^2)- \iota_{ij}\quad j\in \kappa_i, i=1,2.
  \end{equation}
By \eqref{M1equ3v}, $\mult_j(u^2v^2)\geq \iota_{1j}$ for $j\in \kappa_1$, and  $\mult_j(u^2v^2)\geq \iota_{2j}$ for $j\in \kappa_2$. Therefore, $\mult_j(v^4)\geq 0$ for $j\in \kappa$. Moreover, $\iota_{ij}\in \mathbb{Z}$ for $j\in \kappa_i, i=1,2$, and so  $ \mult_j(v^4)  \in \mathbb{Z}$ for $j\in \kappa$.  The following confirms that the coloring of the $v^4$-edges satisfying \eqref{v4eq} is possible.
\begin{align*}
\sum\nolimits_{j\in\kappa}\mult_j(v^4)&= \sum\nolimits_{j\in\kappa_1}\mult_j(v^4)+\sum\nolimits_{j\in\kappa_2}\mult_j(v^4)\\
    &=q(\frac{sn}{4}-sm+\frac{3rm}{4})+(k-q)(\frac{sn}{4}-sm)\\
&\qquad\qquad +\ 2\sum\nolimits_{j\in\kappa}\mult_j(u^3v)+\sum\nolimits_{j\in\kappa}\mult_j(u^2v^2)\\
&=\frac{ksn}{4}-ksm+\frac{3qrm}{4}+2\lambda(n-m)\binom{m}{3}+\lambda\binom{m}{2}\binom{n-m}{2}\\
&=\lambda\binom{n}{4}-\lambda m\binom{n-1}{3}+3\lambda\binom{m}{4}+2\lambda(n-m)\binom{m}{3}+\lambda\binom{m}{2}\binom{n-m}{2}\\
&=\lambda\binom{n}{4}-\lambda m\binom{n-1}{3}+3\lambda\binom{m}{4}+\lambda m\left[\binom{n-1}{3}-\binom{m-1}{3}\right]-\lambda\binom{n}{4}\\
&+\ \lambda\binom{m}{4}+\lambda\binom{n-m}{4}\\
&=\lambda\binom{n-m}{4}=\mult(v^4).
\end{align*}

Finally, we prove that our coloring satisfies \eqref{degj}. For $j\in\kappa$, 
\begin{align*}
    \dg_j(x) &=
     \begin{cases} 
     3\mult_j(u^3v)+2\mult_j(u^2v^2)+\mult_j(uv^3)
       & \mbox { if }x=u \\
     \mult_j(u^3v)+2\mult_j(u^2v^2)+3\mult_j(uv^3)+4\mult_j(v^4) & \mbox { if }x=v \end{cases}\\
     &=
     \begin{cases} 
     3\mult_j(u^3v)+2\rho_{ij}        &  \mbox { if }x=u, j\in \kappa_i, i=1,2 \\
     \mult_j(u^3v)+
     6\rho_{ij}-4\iota_{ij} & \mbox { if }x=v, j\in \kappa_i, i=1,2 
     \end{cases}\\
 &=
     \begin{cases}      
     \begin{cases} 
     m(s-r) & \mbox { for }j\in\kappa_1 \\
      sm & \mbox { for }j\in\kappa_2 \end{cases}
       & \mbox { if }x=u \\
     s(n-m) & \mbox { if }x=v. \end{cases}  \end{align*}
\end{proof}

\section{Colorings II} \label{col2sec}
In this section we  show that we can color the   $u^3v$-edges  and $u^2v^2$-edges of $\cc F$   such that \eqref{M1equ3v} holds. Since we will frequently deal with $u^3v$-edges  and $u^2v^2$-edges, let us make the following abbreviation.

\begin{align*}
&
 \left \{
  \begin{aligned}
    &e:=\lambda(n-m)\binom{m}{3}, \\
&f:=\lambda\binom{m}{2}\binom{n-m}{2},\\
  & e_j:=\mult_j(u^3v), &\mbox{ for }  j\in \kappa,\\
 & f_j:=\mult_j(u^2v^2),&\mbox{ for }  j\in \kappa.
\end{aligned} \right.
\end{align*}
 The following simple lemma will be quite useful.
\begin{lemma} \label{averaginglemma}
Let $c, k\in \mathbb{N}$ and $S=\{1,\dots, k\}$. Let $a_i\in \mathbb{Z}, b_i\in \mathbb{R}$ with $b_i\geq 0,  a_i\leq b_i$ for $i\in S$.  Moreover, suppose that $a_i\geq 0$ for $i\in I\subseteq S$, and $a_i< 0$ for $i\in S\backslash I$. Consider the following system.
\begin{equation} \label{syseq}
 \left \{
  \begin{aligned}
     &\sum\nolimits_{i\in \kappa} x_i=c,&\\
  &\ a_i\leq x_i\leq b_i, &i\in S,\\
     &\ x_i\in \mathbb{N}\cup \{ 0\}, & i\in S.
\end{aligned} \right.
\end{equation}
Then \eqref{syseq} has  a solution if and only if $$\sum\nolimits_{i\in I}  a_i\leq c  \leq \sum\nolimits_{i\in S} \lfloor b_i\rfloor.$$ 
\end{lemma}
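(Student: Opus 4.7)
The plan is to prove the two implications directly, with necessity being immediate and sufficiency following from a one-at-a-time greedy fill.

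\textbf{Necessity.} Suppose $(x_i)_{i\in S}$ satisfies \eqref{syseq}. For $i\in S\setminus I$ we have $x_i\ge 0>a_i$, while for $i\in I$ we have $x_i\ge a_i\ge 0$. Summing over $I$ alone yields $c=\sum_{i\in S}x_i\ge\sum_{i\in I}a_i$. Since each $x_i$ is a nonnegative integer with $x_i\le b_i$, we have $x_i\le\lfloor b_i\rfloor$, and summing over $S$ gives $c\le\sum_{i\in S}\lfloor b_i\rfloor$.

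\textbf{Sufficiency.} Initialize $x_i:=a_i$ for $i\in I$ and $x_i:=0$ for $i\in S\setminus I$. This assignment is a nonnegative integer vector with $a_i\le x_i\le b_i$ for every $i$, and $\sum_{i\in S} x_i=\sum_{i\in I}a_i\le c$. Let $\Delta:=c-\sum_{i\in I}a_i\ge 0$. The total residual capacity is
$$\sum_{i\in S}\bigl(\lfloor b_i\rfloor - x_i\bigr)=\sum_{i\in S}\lfloor b_i\rfloor - \sum_{i\in I}a_i \ge c-\sum_{i\in I}a_i=\Delta,$$
by the upper bound hypothesis. Perform $\Delta$ successive unit increments: at each step choose any index $i$ with $x_i<\lfloor b_i\rfloor$ and replace $x_i$ by $x_i+1$. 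Such an index exists at every step because the total residual capacity exceeds the number of remaining increments. Each increment preserves integrality, nonnegativity, and the bounds $a_i\le x_i\le\lfloor b_i\rfloor\le b_i$, and after $\Delta$ steps we obtain $\sum_{i\in S} x_i=c$.

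The only subtlety worth recording is why the stated lower bound involves $\sum_{i\in I}a_i$ rather than $\sum_{i\in S}a_i$: on $S\setminus I$ the lower bound $a_i<0$ is superseded by the nonnegativity constraint $x_i\ge 0$, so it contributes nothing to the true lower bound on $\sum_i x_i$. Beyond this observation, the lemma is a routine statement about integer interval feasibility and the greedy argument above encounters no genuine obstacle.
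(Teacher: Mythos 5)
Your proof is correct and takes essentially the same route as the paper: the necessity argument is identical, and for sufficiency you use the same initial assignment $x_i=a_i$ on $I$ and $x_i=0$ on $S\setminus I$. The only difference is that you make explicit the greedy unit-increment argument that the paper merely asserts with the phrase ``we can increase the value of $x_i^*$s without violating conditions (i) and (ii).''
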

\begin{proof}
First, suppose that \eqref{syseq} has a solution $(x^*_1,\dots,x^*_k)$. Since $a_i\leq x^*_i\leq b_i$ for $i\in S$ and $x_i$s are non-negative integers, we have $a_i\leq x^*_i\leq \lfloor b_i \rfloor $ for $i\in I$, and $0\leq x^*_i\leq \lfloor b_i \rfloor $ for $i\in S\backslash I$. 
Combining this with $\sum\nolimits_{i\in S} x^*_i=c$ we have 
$$\sum\nolimits_{i\in I} a_i\leq c\leq \sum\nolimits_{i\in S} \lfloor b_i \rfloor.$$
Conversely, suppose that $\sum\nolimits_{i\in I}  a_i\leq c  \leq \sum\nolimits_{i\in S} \lfloor b_i\rfloor$. Let $x_i^*=a_i$ for $i\in I$, and $x_i^*=0$ for $i\in S\backslash I$. Clearly, (i) $x_i^*\in \mathbb{N}\cup \{ 0\}$ for $ i\in S$. 
Since by our hypothesis, $a_i\leq b_i$ for $i\in S$,  we have that (ii) $a_i\leq x_i^*\leq b_i$ for $i\in S$. At this point, $\sum\nolimits_{i\in S} x_i^*=\sum\nolimits_{i\in I} a_i\leq c$, but since $c  \leq \sum\nolimits_{i\in S} \lfloor b_i\rfloor$, we can increase the value of $x_i^*$s without violating conditions (i) and (ii) so that their sum is exactly $c$. 
\end{proof}

By Lemma \ref{m1m2lemma}, if we show that  the following system has a solution, then we are done.
\begin{align} \label{mastersys}
 &\left \{
  \begin{aligned}
     &\sum\nolimits_{j\in \kappa} e_j=e,&\\
     &\sum\nolimits_{j\in \kappa} f_j=f,&\\
  &\ \iota_1\hspace{3pt}\leq e_j\leq \rho_1, &j\in \kappa_1,\\
  &\ \iota_2\hspace{3pt}\leq e_j\leq \rho_2, &j\in \kappa_2,\\
  &\ \iota_{1j}\leq f_j\leq \rho_{1j}, &j\in \kappa_1,\\
  &\ \iota_{2j}\leq f_j\leq \rho_{2j}, &j\in \kappa_2.
\end{aligned} \right.
\end{align} 
By Lemma \ref{averaginglemma}, this system has a solution if and only if 
\begin{alignat}{5}
\sum_{j\in \kappa_1, \iota_1\geq 0}  \iota_{1}&+\sum_{j\in \kappa_2, \iota_2\geq 0}  \iota_{2} & \leq  e &\leq&\hspace{-.5cm} q \lfloor \rho_1\rfloor\ \ +&(k-q) \lfloor \rho_2\rfloor,\label{firsteineq}\\
\sum_{j\in J_1}  \iota_{1j}&+\quad \sum_{j\in J_2}  \iota_{2j}  &\leq  f & \leq & \ \sum_{j\in \kappa_1} \lfloor \rho_{1j}\rfloor +&\sum_{j\in \kappa_2} \lfloor \rho_{2j}\rfloor, \label{secondfineq}
\end{alignat}

where for $i=1,2$, 
$$
J_i:=\{j\in \kappa_i \ |\ \iota_{ij}\geq 0\}= \{j\in \kappa_i \ |\ e_j\leq \rho'_i\}.
$$
Since $\iota_2>\iota_1$, either $\iota_2>\iota_1\geq 0$, or $\iota_2\geq 0\geq \iota_1$, or $0\geq \iota_2>\iota_1$. Thus, 
\begin{align*}
\sum\nolimits_{j\in \kappa_1, \iota_1\geq 0}  \iota_{1}+\sum\nolimits_{j\in \kappa_2, \iota_2\geq 0}  \iota_{2}= \begin{cases} 
      (k-q)\iota_2+q\iota_1 & {\text{if}}\ \iota_2>\iota_1\geq 0, \\
      (k-q)\iota_2 & {\text{if}}\ \iota_2\geq 0\geq \iota_1, \\
      0  & \text{if}\ 0\geq \iota_2>\iota_1.
   \end{cases}
\end{align*}
But by Lemma \ref{iota12leq} and (N6)
\begin{align*}
e\geq  \begin{cases} 
      (k-q)\iota_2+q\iota_1 & {\text{if}}\ \iota_2>\iota_1\geq 0, \\
      (k-q)\iota_2 & {\text{if}}\ \iota_2\geq 0\geq \iota_1, \\
      0  & \text{if}\ 0\geq \iota_2>\iota_1.
   \end{cases}
\end{align*}
Again, by Lemma \ref{iota12leq},
$$
e \leq q \lfloor \rho_1\rfloor+(k-q) \lfloor \rho_2\rfloor. 
$$
So that settles \eqref{firsteineq}. Unfortunately,  \eqref{secondfineq} depends on \eqref{firsteineq}, so we cannot necessarily go on  to show that \eqref{secondfineq} is satisfied. The following example  illustrates this issue. 
\begin{example}\textup{
 Let $m=6, r= 2, n=9, s=4, \lambda=1$. It is easy to verify that   conditions \eqref{divcond+}--\eqref{longnec1} of Theorem \ref{embdrsh4compthmp1} are satisfied, and that $q=5, k=14, e=60, f=45, \iota_1=0, \rho_1=4, \iota_2=6, \rho_2=8$. Clearly, $e_1=4, e_2=e_3=e_4=e_5=0, e_6=8, e_7=e_8=e_9=e_{10}=e_{11}=e_{12}=e_{13}=e_{14}=6$ is a solution to 
\begin{align*}
 &\left \{
  \begin{aligned}
     &\sum\nolimits_{j=1}^{14} e_j=60,&\\
  &\ 0\leq e_j\leq 4, &j&= 1,\dots,5,\\
  &\ 6\leq e_j\leq 8, &j&=6,\dots,14.
\end{aligned} \right.
\end{align*}  
 For $j=1,\dots,5$, $\iota_{1j}=6-2e_j, \rho_{1j}=6-3e_j/2$, and for  $j=6,\dots,14$, $\iota_{2j}=15-2e_j, \rho_{2j}=12-3e_j/2$. Therefore, $\iota_{11}=-2, \rho_{11}=0$, and $\iota_{1j}=\rho_{1j}=6$ for $j=2,\dots,5$. Moreover, $\iota_{26}=-1, \rho_{26}=0$, and $\iota_{2j}= \rho_{2j}=3$ for $j=7,\dots,14$. 
 The following system has no solution.
 \begin{align*}
 &\left \{
  \begin{aligned}
     &&\hspace{-.4cm}\sum\nolimits_{j=1}^{14} &f_j=45,&\\
  &\ &\hspace{-.4cm}-2\leq &f_1\leq 0,\\
  &\ &\hspace{-.4cm}6\leq &f_j\leq 6, &j&= 2,\dots,5,\\
  &\ &\hspace{-.4cm}-1\leq &f_6\leq 0, \\
  &\ &\hspace{-.4cm}3\leq &f_j\leq 3, &j&=7,\dots,14.
\end{aligned} \right.
\end{align*} 
This is because only one choice of $f_i$s, namely  $f_1=0, f_2=f_3=f_4=f_5=6, f_6=0, f_7=f_8=f_9=f_{10}=f_{11}=f_{12}=f_{13}=f_{14}=3$,  satisfies the inequalities in the system, but $\sum\nolimits_{j=1}^{14} f_j=48\neq 45$. 
}\end{example}

Since $\rho'_2>\rho'_1$, either $\rho'_2>\rho'_1\geq 0$, or $\rho'_2\geq 0\geq \rho'_1$, or $0\geq \rho'_2>\rho'_1$. To solve \eqref{mastersys}, there are six cases to consider (see Table \ref{casestable}).  
%Since it is impossible to have $\iota_1>0$ and $\rho_2'\leq0$ simultaneously, 

\begin{table} %\label{formulatable}
		\centering
		\begin{tabular}{|l|l|}
			\hline
			$\begin{aligned}\\[1pt]
			&\iota_1=sm-\frac{sn}{2}-\frac{rm}{2}&\\
			&\iota_2=sm-\frac{sn}{2}&\text{if}\ \kappa_2\neq\emptyset\\
			&\rho_1=\frac{sm}{3}-\frac{rm}{3}&\\
			&\rho_2=\frac{sm}{3}&\text{if}\ \kappa_2\neq\emptyset\\
			&\rho'_1=\frac{sm}{2}-\frac{sn}{8}-\frac{3rm}{8}&\\
			&\rho'_2=\frac{sm}{2}-\frac{sn}{8}&\text{if}\ \kappa_2\neq\emptyset\\
			&\iota_{1j}= sm-\frac{sn}{4}-2\mult_j(u^3v)-\frac{3rm}{4}&\text{if}\ j\in \kappa_1\\
			&\iota_{2j}=sm-\frac{sn}{4}-2\mult_j(u^3v)&\text{if}\ j\in \kappa_2\\
			&\rho_{1j}=\frac{sm}{2}-\frac{3}{2}\mult_j(u^3v)-\frac{rm}{2}&\text{if}\ j\in \kappa_1\\
			&\rho_{2j}=\frac{sm}{2}-\frac{3}{2}\mult_j(u^3v)&\text{if}\ j\in \kappa_2\\[12pt]
			\end{aligned}$
			&
			$\begin{aligned}
			&\iota_1,\iota_2, \iota_{1j},\iota_{2j}\in \mathbb{Z}\\
			&\rho_1,\rho_2,\rho'_1,\rho'_2, \rho_{1j},\rho_{2j}\in \mathbb{R}
			\end{aligned}$\\
			
			\hline
			%\multicolumn{2}{|l|}
			$\begin{aligned}\\[1pt]
			&\iota_2 >\iota_1&\\
			&\rho_2 >\rho_1\geq0&\\
			&\rho'_2 >\rho'_1&\\
			&\iota_1\leq\rho'_1\leq\rho_1&\\
			&\iota_2\leq\rho'_2\leq\rho_2&\text{if}\ \kappa_2\neq\emptyset \\[12pt]
			\end{aligned}$
			&
			$\begin{aligned}
			%&\lceil \rho'_1\rceil  \leq \rho_1&\text{if}&\  n\geq 2m\\
			&\rho_1=0,\rho'_1<0&\text{if}&\ r=s\\
			&\iota_{1j}<0,\rho_{1j}=0&\text{if}&\ r=s
			\end{aligned}$\\
			\hline
			%\multicolumn{2}{|l|}
			$\begin{aligned}
			&\iota_1\geq0\iff n\leq(2-\frac{r}{s})m\\
			&\iota_2\geq0\iff n\leq2m\\
			&\rho'_1\geq0\iff n\leq(4-\frac{3r}{s})m\\
			&\rho'_2\geq0\iff n\leq4m\\
			\end{aligned}$
			&
			$\begin{aligned}\\[1pt]
			&\iota_{1j}\geq 0 \iff \mult_j(u^3v)\leq \rho_1'&\text{if}\  j\in \kappa_1\\
			&\iota_{2j}\geq 0 \iff \mult_j(u^3v)\leq \rho_2'&\text{if}\  j\in\kappa_2\\
			&\rho_{1j}\geq 0 \iff \mult_j(u^3v)\leq \rho_1&\text{if}\  j\in \kappa_1\\
			&\rho_{2j}\geq 0 \iff \mult_j(u^3v)\leq \rho_2&\text{if}\  j\in \kappa_2\\
			&\rho_{1j}\geq \iota_{1j} \iff \mult_j(u^3v)\geq \iota_1&\text{if}\  j\in \kappa_1\\
			&\rho_{2j}\geq \iota_{2j} \iff \mult_j(u^3v)\geq \iota_2&\text{if}\  j\in \kappa_1\\[12pt]
			\end{aligned}$\\
			\hline
		\end{tabular}
		\vspace{.1cm}
		\caption{}
		\label{formulatable}
	\end{table}

\begingroup
\setlength{\tabcolsep}{10pt}      
\renewcommand{\arraystretch}{1.5}
\begin{table}
\centering
\begin{tabular}{|l|l|}
\hline
$\iota_1<\iota_2\leq0,\rho'_1<\rho'_2<0$&$n>4m$\\
\hline
$0\leq \iota_1<\iota_2,0\leq\rho'_1<\rho'_2$&$n\leq(2-r/s)m$\\
\hline
$\iota_1< 0\leq\iota_2,0\leq \rho'_1<\rho'_2$&$(2-r/s)m< n\leq\min\{2m,(4-3r/s)m\}$\\%It contains the case $s=3r/2$
\hline
$\iota_1<0<\iota_2,\rho'_1<0\leq \rho'_2$&$(4-3r/s)m< n<2m,s< 3r/2$\\
\hline
$\iota_1<\iota_2<0,0\leq \rho'_1<\rho'_2$&$2m< n\leq(4-3r/s)m,s>3r/2$\\
\hline
$\iota_1<\iota_2\leq0,\rho'_1<0\leq \rho'_2$&$\max\{2m,(4-3r/s)m\}< n\leq4m$\\
\hline
\end{tabular}
\vspace{.2cm}
\caption{}
\label{casestable}
\end{table}
\endgroup

\subsection{\texorpdfstring{$\bm{\iota_1<\iota_2\leq0,\rho'_1<\rho'_2<0}$}{}}
First, we color the $u^3v$-edges so that 
\begin{equation*}  
 \left \{ \begin{array}{ll}
\iota_1\leq \rho'_1< 0 \leq e_j\leq \rho_1 & \mbox { if } j\in\kappa_1,  \\
\iota_2\leq \rho'_2< 0\leq e_j\leq \rho_2 & \mbox { if } j\in\kappa_2. \end{array} \right.\\
\end{equation*}
By Lemma \ref{iota12leq}, $e \leq  q \lfloor \rho_1\rfloor+(k-q) \lfloor \rho_2\rfloor$, so this is possible. Then, we color the $u^2v^2$-edges so that 
\begin{equation*}  
 \left \{ \begin{array}{ll}
\iota_{1j}\leq 0 \leq f_j\leq \rho_{1j} & \mbox { if } j\in\kappa_1,  \\
\iota_{2j}\leq 0\leq f_j\leq \rho_{2j} & \mbox { if } j\in\kappa_2. \end{array} \right.\\
\end{equation*}
By Lemma \ref{rhoijlem}, $ f \leq   \sum\nolimits_{j\in \kappa_1} \lfloor \rho_{1j}\rfloor + \sum\nolimits_{j\in \kappa_2} \lfloor \rho_{2j}\rfloor$, so this is also  possible. 

\subsection{\texorpdfstring{$\bm{0\leq \iota_1<\iota_2,0\leq\rho'_1<\rho'_2}$}{}}
First, we color the $u^3v$-edges so that 
\begin{equation*}  
 \left \{ \begin{array}{ll}
 0\leq \iota_1 \leq e_j\leq \rho'_1\leq \rho_1 & \mbox { if } j\in\kappa_1,  \\
 0\leq \iota_2\leq e_j\leq \rho'_2\leq \rho_2 & \mbox { if } j\in\kappa_2. \end{array} \right.\\
\end{equation*}
By Lemmas \ref{iota12leq} and  \ref{lemu3vrhs}, $q\iota_1+(k-q)\iota_2\leq e \leq  q \lfloor \rho'_1\rfloor+(k-q) \lfloor \rho'_2\rfloor$, so this is possible. Then, we color the $u^2v^2$-edges so that 
\begin{equation*}  
 \left \{ \begin{array}{ll}
 0\leq\iota_{1j} \leq f_j\leq \rho_{1j} & \mbox { if } j\in\kappa_1,  \\
 0\leq\iota_{2j}\leq f_j\leq \rho_{2j} & \mbox { if } j\in\kappa_2. \end{array} \right.\\
\end{equation*}
By Lemmas \ref{iotaijlem} and \ref{rhoijlem}, $\sum\nolimits_{j\in \kappa_1}  \iota_{1j}+\sum\nolimits_{j\in \kappa_2}  \iota_{2j}\leq  f \leq   \sum\nolimits_{j\in \kappa_1} \lfloor \rho_{1j}\rfloor + \sum\nolimits_{j\in \kappa_2} \lfloor \rho_{2j}\rfloor$, so this is also  possible. 
\subsection{\texorpdfstring{$\bm{\iota_1< 0\leq\iota_2,0\leq \rho'_1<\rho'_2}$}{}}
First, we color the $u^3v$-edges so that 
\begin{equation*}  
 \left \{ \begin{array}{ll}
 \iota_1 <  0\leq e_j\leq \rho'_1\leq \rho_1 & \mbox { if } j\in\kappa_1,  \\
 0\leq \iota_2\leq e_j\leq \rho'_2\leq \rho_2 & \mbox { if } j\in\kappa_2. \end{array} \right.\\
\end{equation*}
By (N6) and Lemma   \ref{lemu3vrhs}, $(k-q)\iota_2\leq e \leq  q \lfloor \rho'_1\rfloor+(k-q) \lfloor \rho'_2\rfloor$, so this is possible.  Then, we color the $u^2v^2$-edges so that 
\begin{equation*}  
 \left \{ \begin{array}{ll}
 0\leq\iota_{1j} \leq f_j\leq \rho_{1j} & \mbox { if } j\in\kappa_1,  \\
 0\leq\iota_{2j}\leq f_j\leq \rho_{2j} & \mbox { if } j\in\kappa_2. \end{array} \right.\\
\end{equation*}
By Lemmas \ref{iotaijlem} and \ref{rhoijlem}, $\sum\nolimits_{j\in \kappa_1}  \iota_{1j}+\sum\nolimits_{j\in \kappa_2}  \iota_{2j}\leq  f \leq   \sum\nolimits_{j\in \kappa_1} \lfloor \rho_{1j}\rfloor + \sum\nolimits_{j\in \kappa_2} \lfloor \rho_{2j}\rfloor$, so this is also  possible. 

\subsection{\texorpdfstring{$\bm{\iota_1<0<\iota_2,\rho'_1<0\leq \rho'_2}$}{}}
First, we color the $u^3v$-edges so that
\begin{equation*}
  \left\{
  \begin{aligned}
    \iota_1 \leq \rho'_1<\  0= e_j&\leq \rho_1 & \mbox { if } j\in\kappa_1,  \\
     0 < \iota_2\leq e_j &\leq \rho'_2\leq  \rho_2 & \mbox { if } j\in\kappa_2.
  \end{aligned}
  \right.
\end{equation*}
By (N6) and Lemma  \ref{lemu3vrhs}, $(k-q)\iota_2\leq e \leq  (k-q)\left\lfloor\rho'_2\right\rfloor$, so this is possible.   Then, we color the $u^2v^2$-edges so that 
\begin{equation*}  
 \left \{ \begin{array}{ll}
  \iota_{1j} \leq0\leq f_j\leq \rho_{1j} & \mbox { if } j\in\kappa_1,  \\
 0\leq\iota_{2j}\leq f_j\leq \rho_{2j} & \mbox { if } j\in\kappa_2. \end{array} \right.\\
\end{equation*}
By Lemmas \ref{iotaijlem} and   \ref{rhoijlem}, $\sum\nolimits_{j\in \kappa_2}  \iota_{2j}\leq  f \leq   \sum\nolimits_{j\in \kappa_1} \lfloor \rho_{1j}\rfloor + \sum\nolimits_{j\in \kappa_2} \lfloor \rho_{2j}\rfloor$, so this is also  possible.

\subsection{\texorpdfstring{$\bm{\iota_1<\iota_2<0,0\leq \rho'_1<\rho'_2}$}{}} First, we show that for $i=1,2$, $\lceil \rho'_i\rceil  \leq \rho_i$.  Since $(m,r,\lambda)$ and $(n,s,\lambda)$ are admissible, we have $\lceil \rho'_i\rceil  \leq \rho'_i + 1/2$ for $i=1,2$. Moreover,  $m\geq4$, $s\geq  \lceil 3r/2 \rceil\geq 2$, and $n\geq 2m$. Therefore, 

\begin{align*}
24 (\rho_i-\lceil \rho'_i\rceil) &\geq 24 (\rho_i-\rho'_i - 1/2)\\
&=
\begin{dcases}
      3sn-4sm+rm-12 & \ \ \   \text{ if } i=1, \\
      3sn-4sm-12& \ \ \  \text{ if }i=2.
    \end{dcases}\\
    &\geq 3sn-4sm-12 \\&\geq 2sm-12>0.
\end{align*}
There are three cases to consider. 
\begin{enumerate}[label=(\roman*)]
\item $e\leq q\left\lfloor\rho'_1\right\rfloor+(k-q)\left\lfloor\rho'_2\right\rfloor$: First, we color the $u^3v$-edges so that
\begin{equation*}  
 \left \{ \begin{array}{ll}
 \iota_1 < 0\leq e_j\leq \rho'_1\leq \rho_1 & \mbox { if } j\in\kappa_1,  \\
 \iota_2 < 0\leq  e_j\leq \rho'_2\leq \rho_2 & \mbox { if } j\in\kappa_2. \end{array} \right.\\
\end{equation*}
By our hypothesis, $e \leq  q \lfloor \rho'_1\rfloor+(k-q) \lfloor \rho'_2\rfloor$, so this is possible. Then, we color the $u^2v^2$-edges so that 
\begin{equation*}  
 \left \{ \begin{array}{ll}
 0\leq \iota_{1j} \leq f_j\leq \rho_{1j} & \mbox { if } j\in\kappa_1,  \\
 0\leq\iota_{2j}\leq f_j\leq \rho_{2j} & \mbox { if } j\in\kappa_2. \end{array} \right.\\
\end{equation*}
By Lemmas \ref{iotaijlem} and \ref{rhoijlem}, $\sum\nolimits_{j\in \kappa_1}  \iota_{1j}+\sum\nolimits_{j\in \kappa_2}  \iota_{2j}\leq  f \leq   \sum\nolimits_{j\in \kappa_1} \lfloor \rho_{1j}\rfloor + \sum\nolimits_{j\in \kappa_2} \lfloor \rho_{2j}\rfloor$, so this is also  possible.  
\item $e\geq q\left\lceil\rho'_1\right\rceil+(k-q)\left\lceil\rho'_2\right\rceil$: First, we color the $u^3v$-edges so that
\begin{equation*}  
 \left \{ \begin{array}{ll}
\iota_1 < 0\leq   \rho'_1\leq e_j\leq \rho_1 & \mbox { if } j\in\kappa_1,  \\
 \iota_2<  0\leq\rho'_2\leq e_j\leq \rho_2 & \mbox { if } j\in\kappa_2. \end{array} \right.\\
\end{equation*}
By our hypothesis and  Lemma \ref{iota12leq}, $q\left\lceil\rho'_1\right\rceil+(k-q)\left\lceil\rho'_2\right\rceil\leq e \leq q \lfloor \rho_1\rfloor+(k-q) \lfloor \rho_2\rfloor $, so this is possible.  Then, we color the $u^2v^2$-edges so that 
\begin{equation*}  
 \left \{ \begin{array}{ll}
\iota_{1j} \leq 0\leq  f_j\leq \rho_{1j} & \mbox { if } j\in\kappa_1,  \\
\iota_{2j}\leq 0\leq f_j\leq \rho_{2j} & \mbox { if } j\in\kappa_2. \end{array} \right.\\
\end{equation*}
By Lemma  \ref{rhoijlem}, $f \leq   \sum\nolimits_{j\in \kappa_1} \lfloor \rho_{1j}\rfloor + \sum\nolimits_{j\in \kappa_2} \lfloor \rho_{2j}\rfloor$, so this is also  possible. 
\item $q\left\lfloor\rho'_1\right\rfloor+(k-q)\left\lfloor\rho'_2\right\rfloor<e<q\left\lceil\rho'_1\right\rceil+(k-q)\left\lceil\rho'_2\right\rceil$:
Observe that  at most one of $\rho'_1$ and $\rho'_2$ is an integer. 
We color the $u^3v$-edges so that
\begin{equation*}  
 \left \{ \begin{array}{ll}
 \iota_1 < 0\leq  \lfloor \rho'_1 \rfloor \leq e_j\leq \lceil \rho'_1\rceil  \leq \rho_1& \mbox { if } j\in\kappa_1,  \\
 \iota_2<  0\leq\lfloor \rho'_2 \rfloor \leq e_j\leq \lceil \rho'_2\rceil  \leq \rho_2 & \mbox { if } j\in\kappa_2. \end{array} \right.\\
\end{equation*}
By our hypothesis, $q\left\lfloor\rho'_1\right\rfloor+(k-q)\left\lfloor\rho'_2\right\rfloor<e<q\left\lceil\rho'_1\right\rceil+(k-q)\left\lceil\rho'_2\right\rceil$, so this is possible. Since
$$
\iota_{ij}=2(\rho'_i-e_j)\in \{-1,0, 1\} \mbox { for } i=1,2, j\in \kappa_i, 
$$
we have
\begin{equation} \label{plusminussum}
\sum\nolimits_{j\in J_1}  \iota_{1j}+\sum\nolimits_{j\in J_2}  \iota_{2j}  \leq k.
\end{equation}
Now, we color the $u^2v^2$-edges so that 
\begin{equation*}  
 \left \{ \begin{array}{ll}
\iota_{1j} \leq   f_j\leq \rho_{1j} & \mbox { if } j\in\kappa_1,  \\
\iota_{2j}\leq  f_j\leq \rho_{2j} & \mbox { if } j\in\kappa_2. \end{array} \right.\\
\end{equation*}
By \eqref{plusminussum}, and Lemmas  \ref{rhoijlem} and  \ref{M2Case3.3.3k-q+M2Case3.3.3k},  $$\sum\nolimits_{j\in J_1}  \iota_{1j}+\sum\nolimits_{j\in J_2}  \iota_{2j}  \leq k\leq  f \leq   \sum\nolimits_{j\in \kappa_1} \lfloor \rho_{1j}\rfloor + \sum\nolimits_{j\in \kappa_2} \lfloor \rho_{2j}\rfloor,$$ 
 so coloring the $u^2v^2$-edges is also possible.
\end{enumerate}

\subsection{\texorpdfstring{$\bm{\iota_1<\iota_2\leq0,\rho'_1<0\leq \rho'_2}$}{}} First, we show that $\lceil \rho'_2\rceil  \leq \rho_2$.  We have
$$z:=24 (\rho_2-\lceil \rho'_2\rceil)  = 24 (\rho_2-\rho'_2 - 1/2)=3sn-4sm-12$$
Since $n\geq \max\{2m,(4-3r/s)m\}$, we have 
\begin{align*}
n\geq  \begin{cases} 
      2m  & {\text{if}}\ s< 3r/2, \\
      (4-3r/s)m  & \text{if}\ s\geq 3r/2.
   \end{cases}
\end{align*}
Recall that if $m= 4$, then $r\geq 2$. If $m=5$, then since $(m,r,\lambda)$ is admissible, we have $r\geq 4$.  Therefore, if $n\geq  2m$, then $z\geq 2sm-12\geq 0$. If $n\geq (4-3r/s)m$ and  $s\geq 3r/2$, then
\begin{align*}
z  &\geq 3s\big ((4-3r/s)m\big )-4sm-12\\
    &=m (8s-9r) - 12\\
&\geq m (12r-9r) - 12\\
&\geq 3rm - 12\geq 0.
\end{align*}
There are three cases to consider. 
\begin{enumerate}[label=(\roman*)]
\item $e\leq (k-q)\left\lfloor\rho'_2\right\rfloor$: First, we color the $u^3v$-edges so that
\begin{equation*}
  \left\{
  \begin{aligned}
    \iota_1 \leq \rho'_1< 0= e_j&\leq \rho_1 & \mbox { if } j\in\kappa_1,  \\
    \iota_2\leq 0\leq  e_j&\leq \rho'_2\leq \rho_2 & \mbox { if } j\in\kappa_2.
  \end{aligned}
  \right.
\end{equation*}
This is possible by our hypothesis. Then, we color the $u^2v^2$-edges so that 
\begin{equation*}  
 \left \{ \begin{array}{ll}
  \iota_{1j} \leq0\leq f_j\leq \rho_{1j} & \mbox { if } j\in\kappa_1,  \\
 0\leq\iota_{2j}\leq f_j\leq \rho_{2j} & \mbox { if } j\in\kappa_2. \end{array} \right.\\
\end{equation*}
By  Lemmas \ref{iotaijlem} and \ref{rhoijlem}, $\sum\nolimits_{j\in \kappa_2}  \iota_{2j}\leq  f \leq   \sum\nolimits_{j\in \kappa_1} \lfloor \rho_{1j}\rfloor + \sum\nolimits_{j\in \kappa_2} \lfloor \rho_{2j}\rfloor$, so this is also  possible. 
\item $e\geq (k-q)\left\lceil\rho'_2\right\rceil$: We color the $u^3v$-edges such that
\begin{equation*}  
 \left \{ \begin{array}{ll}
 \iota_1\leq \rho'_1< 0\leq e_j\leq \rho_1 & \mbox { if } j\in\kappa_1,  \\
\iota_2\leq 0\leq \rho'_2\leq e_j\leq \rho_2 & \mbox { if } j\in\kappa_2. \end{array} \right.\\
\end{equation*}
By our hypothesis and  Lemma \ref{iota12leq}, $(k-q)\left\lceil\rho'_2\right\rceil\leq  e\leq q\left\lfloor\rho_1\right\rfloor +(k-q)\left\lfloor\rho_2\right\rfloor$, so this is   possible. Then, we color the $u^2v^2$-edges so that 
\begin{equation*}  
 \left \{ \begin{array}{ll}
  \iota_{1j} \leq0\leq f_j\leq \rho_{1j} & \mbox { if } j\in\kappa_1,  \\
\iota_{2j}\leq  0\leq f_j\leq \rho_{2j} & \mbox { if } j\in\kappa_2. \end{array} \right.\\
\end{equation*}
By Lemma  \ref{rhoijlem}, $f \leq   \sum\nolimits_{j\in \kappa_1} \lfloor \rho_{1j}\rfloor + \sum\nolimits_{j\in \kappa_2} \lfloor \rho_{2j}\rfloor$, so this is also  possible. 
\item $(k-q)\left\lfloor\rho'_2\right\rfloor<e<(k-q)\left\lceil\rho'_2\right\rceil$: Observe that $\rho'_2\notin \mathbb{N}$. We color the $u^3v$-edges such that
\begin{equation*}
  \left\{
  \begin{aligned}
    \iota_1\leq \rho'_1< 0=e_j &\leq \rho_1 & \mbox { if } j\in\kappa_1,  \\
    \iota_2\leq  0\leq\lfloor \rho'_2 \rfloor \leq e_j&\leq \lceil \rho'_2\rceil  \leq \rho_2 & \mbox { if } j\in\kappa_2.
  \end{aligned}
  \right.
\end{equation*}
By our hypothesis, $(k-q)\left\lfloor\rho'_2\right\rfloor<e<(k-q)\left\lceil\rho'_2\right\rceil$, so this is possible. Since
$$
\iota_{2j}=2(\rho'_2-e_j)\in \{-1, 1\} \mbox { for } j\in \kappa_2, 
$$
we have
\begin{equation} \label{plusminussum2}
\sum\nolimits_{j\in J_2}  \iota_{2j}  \leq k-q.
\end{equation}
Now, we color the $u^2v^2$-edges so that 
\begin{equation*}
  \left\{
  \begin{aligned}
    \iota_{1j} \leq0&\leq f_j\leq \rho_{1j} & \mbox { if } j\in\kappa_1,  \\
    \iota_{2j}&\leq f_j\leq \rho_{2j} & \mbox { if } j\in\kappa_2.
  \end{aligned}
  \right.
\end{equation*}
By \eqref{plusminussum2}, Lemmas \ref{rhoijlem} and  \ref{M2Case3.3.3k-q+M2Case3.3.3k}, $$\sum\nolimits_{j\in \kappa_2}  \iota_{2j}\leq k-q< k\leq  f \leq   \sum\nolimits_{j\in \kappa_1} \lfloor \rho_{1j}\rfloor + \sum\nolimits_{j\in \kappa_2} \lfloor \rho_{2j}\rfloor,$$  so coloring the $u^2v^2$-edges is also possible.
\end{enumerate}

\section{Inequalities}  \label{ineqsec} %Some Intricate 
Before we can color the $u^3v$-edges  and $u^2v^2$-edges of $\cc F$, we need to establish several important inequalities. Most of these multivariate inequalities have been reduced to  two-variable inequalities and then  have been verified by the computer algebra system Mathematica. Since a central part of our argument relies on these inequalities, we shall provide  detailed proofs here. A reader is welcome to skip this section in the first reading. 

For $x\in \mathbb{R}$, $\frc(x)$ denotes the fractional part of $x$ which is $x-\lfloor x \rfloor$. 
\begin{lemma} \label{iota12leq} 
\begin{align*} 
q\iota_1 +(k-q)\iota_2  \leq e\leq q\lfloor\rho_1\rfloor +(k-q)\lfloor \rho_2\rfloor.
 \end{align*}
\end{lemma}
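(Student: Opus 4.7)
I would prove the two inequalities separately; in each case the argument is by algebraic manipulation, using the identities $qr=\lambda\binom{m-1}{3}$ and $ks=\lambda\binom{n-1}{3}$ coming from the definitions of $q$ and $k$, together with parts (b) and (c) of Lemma \ref{doublcount}.

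For the lower bound, I first expand
\[ q\iota_1+(k-q)\iota_2 = k\iota_2 - q(\iota_2-\iota_1) = \lambda\bigl(m-\tfrac{n}{2}\bigr)\binom{n-1}{3} - \tfrac{\lambda m}{2}\binom{m-1}{3}. \]
Multiplying $e-[q\iota_1+(k-q)\iota_2]\ge 0$ by $2/\lambda$ and rewriting each binomial via Lemma \ref{doublcount}(b) and (c), the problem collapses to
\[ (n-m)(n-m-1)\binom{m-1}{2} + (n-2m)\Bigl[(m-1)\binom{n-m}{2}+\binom{n-m}{3}\Bigr]\ge 0. \]
When $n\ge 2m$ this is manifest. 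When $m<n<2m$, set $d=n-m$; since $n\ge 4m/3$ and $m\ge 4$ we have $d\ge 2$, and dividing by $d(d-1)>0$ reduces the inequality to $(d-2)(2m+d-3)\ge 0$, which is obvious.

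For the upper bound, observe that
\[ q\rho_1+(k-q)\rho_2 = \tfrac{m}{3}(ks-qr) = \tfrac{\lambda m}{3}\Bigl[\binom{n-1}{3}-\binom{m-1}{3}\Bigr] = e+\tfrac{\lambda}{3}\Bigl[2\binom{m}{2}\binom{n-m}{2}+m\binom{n-m}{3}\Bigr] \]
by Lemma \ref{doublcount}(c). Setting $\epsilon_i:=3\,\frc(\rho_i)\in\{0,1,2\}$, the desired inequality is equivalent to
\[ \lambda\Bigl[2\binom{m}{2}\binom{n-m}{2}+m\binom{n-m}{3}\Bigr]\ge q\epsilon_1+(k-q)\epsilon_2. \]
If $3\mid m$ then $\rho_1,\rho_2\in\mathbb{Z}$ and $\epsilon_1=\epsilon_2=0$, so the inequality is trivial. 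Otherwise I would use the bound $\epsilon_i\le 2$ to reduce matters to
\[ sm\Bigl[(m-1)\binom{n-m}{2}+\binom{n-m}{3}\Bigr]\ge 2\binom{n-1}{3}, \]
or, by Lemma \ref{doublcount}(b), to $(sm-2)\binom{n-1}{3}\ge sm\bigl[\binom{m-1}{3}+(n-m)\binom{m-1}{2}\bigr]$.

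The main obstacle is this last inequality in the regime $3\nmid m$. Here $n>4m/3$ strictly (since $4m/3$ is not an integer when $3\nmid m$), and the admissibility condition $4\mid sn$ prevents $s$ from being too small relative to $n-m$. Exploiting this together with $m\ge 4$, I would complete the proof by a short case analysis on $m\bmod 3$, $s\bmod 3$, and the smallest value of $s$ consistent with admissibility, checking each case directly.
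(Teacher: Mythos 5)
Your lower‑bound argument is fine and is essentially the paper's: both start from $q\iota_1+(k-q)\iota_2=\lambda(m-n/2)\binom{n-1}{3}-\frac{\lambda m}{2}\binom{m-1}{3}$, and your expression $(n-m)(n-m-1)\binom{m-1}{2}+(n-2m)\bigl[(m-1)\binom{n-m}{2}+\binom{n-m}{3}\bigr]$ does indeed simplify to $\binom{n-m}{3}(n+m-3)$, which is the closed form the paper produces directly (making the case split on $n\gtrless 2m$ unnecessary, but harmless).

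The gap is in the upper bound. You correctly reduce it to $q\epsilon_1+(k-q)\epsilon_2\le\lambda\bigl[2\binom{m}{2}\binom{n-m}{2}+m\binom{n-m}{3}\bigr]$ with $\epsilon_i=3\,\frc(\rho_i)$, but then you apply the uniform bound $\epsilon_i\le 2$, $q\epsilon_1+(k-q)\epsilon_2\le 2k$, and arrive at $sm\bigl[(m-1)\binom{n-m}{2}+\binom{n-m}{3}\bigr]\ge 2\binom{n-1}{3}$. This inequality is not at all ``short'' to verify: it fails for $n$ near $m$ and only becomes true after exploiting $n\ge 4m/3$, admissibility, and (depending on the case) $s\ge 2$ or $n\ge 2m$; for general parameters its truth is exactly the kind of multi‑case polynomial verification the paper carries out in Lemma~\ref{iota12leq}(a)--(c) (with Mathematica assistance for the polynomial estimates). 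More importantly, when $k=q$ your target inequality is literally the second case of the necessary condition (N8): $\frac{1}{s}\binom{n-1}{3}\le\binom{m}{2}\binom{n-m}{2}+\frac{m}{2}\binom{n-m}{3}$. The paper handles $k=q$ precisely by \emph{invoking} (N8), and establishes separately (Lemma after Lemma~\ref{A7}) that (N8) is redundant given the other hypotheses --- itself a nontrivial argument using facts like ``$k=q$ and $m(s-r)\nequiv 0\pmod 3$ forces $n\ge m+2$'' and ``$k=q,\,n=m+2$ forces $s\ge r+2$.'' Your proposal neither invokes (N8) nor indicates how to reprove it, so the ``short case analysis on $m\bmod 3$, $s\bmod 3$'' does not close the argument. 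Finally, the uniform bound $\epsilon_i\le 2$ discards useful structure: when $r=s$ one has $\rho_1=0$ hence $\epsilon_1=0$, and the paper uses the resulting tighter bound $2(k-q)\le 2\lambda\bigl[\binom{n-1}{3}-\binom{m-1}{3}\bigr]$, which makes the $r=s$ subcase considerably easier; you would have to prove the stronger (but, after extra work, still true) inequality without this slack.
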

\begin{proof}
The following proves the left-hand side inequality.
\begin{align*}\label{longeq0} 
q\iota_1 +(k-q)\iota_2 & = ksm-\frac{ksn}{2}-\frac{qrm}{2}\\
&=\lambda\big(m-\frac{n}{2}\big)\binom{n-1}{3}-\lambda\frac{m}{2}\binom{m-1}{3}\\
&=\lambda m\binom{n-1}{3}- 2\lambda\binom{n}{4}-2\lambda\binom{m}{4}\\
&=\lambda(n-m)\binom{m}{3}-\frac{\lambda}{2}\binom{n-m}{3}(n+m-3)\\
&<\lambda(n-m)\binom{m}{3}.
\end{align*}
Since
\begin{align*}
3q\lfloor\rho_1\rfloor +3(k-q)\lfloor \rho_2\rfloor&= 3q\rho_1+3(k-q)\rho_2-3q\frc(\rho_1)-3(k-q)\frc(\rho_2)\\
    &=m(ks-qr)-3q\frc(\rho_1)-3(k-q)\frc(\rho_2)\\
&=\lambda m\left[\binom{n-1}{3}-\binom{m-1}{3}\right]-3q\frc(\rho_1)-3(k-q)\frc(\rho_2),
\end{align*}
the right hand side inequality  is equivalent to
$$\lambda m\binom{n-m}{3}+2\lambda\binom{m}{2}\binom{n-m}{2}\geq 3q\frc(\rho_1)+3(k-q)\frc(\rho_2).$$
There are three cases to consider.
\begin{enumerate} [label=({\alph*})]
\item $k>q, s>r$: In this case $s\geq 2, n\geq 4m/3$. Since  $\frc(\rho_i)\in \{0,1/3, 2/3\}$ (for $i=1,2$), we have $3q\frc(\rho_1)+3(k-q)\frc(\rho_2)\leq 2k\leq \lambda \binom{n-1}{3}$. Therefore, 
\begin{align*}
&\alpha:= \frac{6}{\lambda}\left[\lambda m\binom{n-m}{3}+2\lambda\binom{m}{2}\binom{n-m}{2}- 3q\frc(\rho_1)-3(k-q)\frc(\rho_2)\right]\\
&\qquad\geq6 m\binom{n-m}{3}+12\binom{m}{2}\binom{n-m}{2}-6\binom{n-1}{3}\\
&\qquad=(m-1)\left(n^3-6n^2+(-3m^2+6m+11)n+2m^3-m^2-6m-6\right).
\end{align*}
Let $f(n)=n^3-6n^2+(-3m^2+6m+11)n+2m^3-m^2-6m-6$, and $g(m)=10m^3-99m^2+234m-162$. Since $g(7)=55>0,g'(7)=318>0,g''(7)=222>0$, we have that $g(m)>0$ for $m\geq 7$, and so, $f(4m/3)=g(m)/27>0$ for $m\geq7$. Moreover, 
$f'(4m/3)=(7m^2-30m+33)/3>0$ and $f''(4m/3)=8m-12>0$ for $m\geq 7$. We conclude that $f(n)\geq0$ for $n\geq4m/3$ and $m\geq7$. Thus, $\alpha\geq 0$ for $m\geq 7, n\geq 4m/3$. 

Recall that if $m=4$, then $r\geq 2$. For $m=5,6$, since $(m,r,\lambda)$ is admissible, we have $4 \mid rm$. Consequently,  for  $m=4,5,6$, $s>r \geq 2$. We have
\begin{align*}
&\alpha\geq 6 m\binom{n-m}{3}+12\binom{m}{2}\binom{n-m}{2}-4\binom{n-1}{3}\\
&\qquad= \begin{cases} 
      \frac{1}{3}(10n^3-60n^2-106n+732) & {\text{if}}\ m=4, \\
      \frac{1}{3}(13n^3-78n^2-397n+2262) & {\text{if}}\ m=5, \\
      \frac{8}{3}(2n^3-12n^2-113n+663)  & \text{if}\ m=6.
   \end{cases}
\end{align*}
If $h(n)=10n^3-60n^2-106n+732, k(n)=13n^3-78n^2-397n+2262,\ \text{and}\ \ell(n)=2n^3-12n^2-113n+663$, then $h(6)=96>0,h'(6)=254>0, h''(6)=240>0 , k(7)=120>0, k'(7)=422>0, k''(7)=390>0, \ell(8)=15>0, \ell'(8)=79>0, \ell''(8)=72>0$. We conclude that  $k(n)\geq0$ for $n\geq7$ and $\ell(n)\geq0$ for $n\geq8$.
  Thus, $\alpha\geq 0$ for $m=4, n\geq6$, $m=5, n\geq 7$ and $m=6, n\geq 8$. 

\item $r=s$: In this case $n\geq 2m, \rho_1=0$. We have $3q\frc(\rho_1)+3(k-q)\frc(\rho_2)\leq 2(k-q)\leq 2\lambda \left[\binom{n-1}{3}-\binom{m-1}{3}\right]$. Therefore, 
\begin{align*}
&\frac{6}{\lambda}\left(\lambda m\binom{n-m}{3}+2\lambda\binom{m}{2}\binom{n-m}{2}- 3q\frc(\rho_1)-3(k-q)\frc(\rho_2)\right)\\
&\qquad\geq6 m\binom{n-m}{3}+12\binom{m}{2}\binom{n-m}{2}-12\binom{n-1}{3}+12\binom{m-1}{3}\\
&\qquad=(m-2)(n-m)(n^2 + m n- 6 n -2 m^2  - 3 m +  11)\\
&\qquad> n(n-6)-3m+11+m(n-2m)\\
&\qquad\geq  4n-3m+11>0.
\end{align*}
\item $k=q$: Since $3q\frc(\rho_1)+3(k-q)\frc(\rho_2)=3q\frc(\rho_1)\in \{0, q, 2q\}$, we have

\begin{align*}
\lambda m\binom{n-m}{3}+2\lambda\binom{m}{2}\binom{n-m}{2}\geq \begin{cases} 
      0 & {\text{if}}\ m(s-r)\equiv0\Mod{3}, \\
      q & {\text{if}}\ m(s-r)\equiv1\Mod{3}, \\
      2q  & \text{if}\ m(s-r)\equiv2\Mod{3},
   \end{cases}
\end{align*}
where the case $m(s-r)\equiv0\Mod{3}$ is trivial, and the remaining two cases are implied by (N8). % Observe that for the case $m(s-r)\equiv1\Mod{3}$, (N8) is a stronger statement. 
\end{enumerate}
\end{proof}

\begin{lemma}\label{s_less_3r/2,k_not_q}
If $s<3r/2$, then $k>q$.
\end{lemma}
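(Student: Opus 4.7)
I will argue by contrapositive: assume $k=q$ and show $s\geq 3r/2$ (in fact a much stronger bound will drop out). Since $q=\frac{\lambda}{r}\binom{m-1}{3}$ and $k=\frac{\lambda}{s}\binom{n-1}{3}$, the hypothesis $k=q$ is equivalent to the algebraic identity $r\binom{n-1}{3}=s\binom{m-1}{3}$. Expanding the left side via Lemma \ref{doublcount}(b) and subtracting $r\binom{m-1}{3}$ from both sides yields
\begin{equation*}
(s-r)\binom{m-1}{3}=r\left[(n-m)\binom{m-1}{2}+(m-1)\binom{n-m}{2}+\binom{n-m}{3}\right].
\end{equation*}
All three summands on the right are non-negative, so I can discard the last two and focus on the first.

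Next I will exploit the standing assumption $n\geq 4m/3$ (stated at the start of Section \ref{proofsketchsec}), which gives $n-m\geq m/3$. This yields
\begin{equation*}
(s-r)\binom{m-1}{3}\geq \frac{rm}{3}\binom{m-1}{2}=\frac{r\,m(m-1)(m-2)}{6}.
\end{equation*}
Comparing with $\binom{m-1}{3}=(m-1)(m-2)(m-3)/6$, the ratio of the right-hand side to $r\binom{m-1}{3}$ is $m/(m-3)$, which is strictly greater than $1$ whenever $m\geq 4$. Thus $(s-r)\binom{m-1}{3}>r\binom{m-1}{3}$, forcing $s-r>r$, i.e.\ $s>2r$, and in particular $s\geq 3r/2$. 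This is the contrapositive of the claim, so the lemma follows together with the standing fact $k\geq q$ coming from (N2).

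\textbf{Where the work is.} There is no real obstacle: the lemma is essentially a numerical consequence of the two standing assumptions $n\geq 4m/3$ and $m\geq 4$, combined with the ``polynomial identity'' form of $k=q$. The only choice to make is to isolate the dominant term $(n-m)\binom{m-1}{2}$ among the three terms produced by Lemma \ref{doublcount}(b); the others are free slack that makes the implication considerably stronger than what is needed later in the paper.
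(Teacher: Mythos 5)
Your proof is correct, and it takes a genuinely cleaner route than the paper's. The paper also assumes $k=q$, converts $s<3r/2$ into $2\binom{n-1}{3}<3\binom{m-1}{3}$, and then simply expands everything into polynomials in $m,n$ and verifies (after substituting $n\geq 4m/3$) that $2(n-1)(n-2)(n-3)-3(m-1)(m-2)(m-3)\geq 0$ by a brute-force cubic inequality; this establishes only the needed bound $\binom{n-1}{3}/\binom{m-1}{3}\geq 3/2$. You instead invoke Lemma \ref{doublcount}(b) to decompose $\binom{n-1}{3}$, discard the two nonnegative remainder terms, and isolate the single term $(n-m)\binom{m-1}{2}$. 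Combined with $n-m\geq m/3$ and $m\geq 4$, this gives $(s-r)\binom{m-1}{3}\geq \frac{m}{m-3}\,r\binom{m-1}{3}>r\binom{m-1}{3}$, hence the strictly stronger conclusion $s>2r$. Both arguments rely on the same two standing facts ($k=q$ as the contradiction/contrapositive hypothesis, and $n\geq 4m/3$), but your use of the combinatorial identity replaces the paper's polynomial bookkeeping with a one-line dominance estimate and incidentally exposes that the true threshold is $2r$ rather than $3r/2$. The only thing to be explicit about is what you already note at the end: (N2) guarantees $k\geq q$ always, so ``$k\not> q$'' does reduce to $k=q$, making the contrapositive legitimate.
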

\begin{proof}
Suppose on the contrary that $k=q$, which implies $s=r\binom{n-1}{3}/\binom{m-1}{3}$. Since $s<3r/2$, we have $2\binom{n-1}{3}<3\binom{m-1}{3}$, or equivalently, $2(n-1)(n-2)(n-3)<3(m-1)(m-2)(m-3)$. Because $n\geq4m/3$, we have
\begin{align*}
&2(n-1)(n-2)(n-3)-3(m-1)(m-2)(m-3)\\
&\qquad\quad\geq \frac{1}{9}(32m^2-72m+36)(4m/3-3)-(3m^2-9m+6)(m-3)\\
&\qquad\quad\geq0,
\end{align*}
which is a contradiction. 
\end{proof}
\begin{lemma}\label{lemu3vrhs}
\begin{align*}
e\leq  \begin{cases} 
      q\lfloor\rho'_1\rfloor +(k-q)\lfloor \rho'_2\rfloor & {\text{if}}\  \iota_2\geq 0,\rho'_1\geq0,  \\
      (k-q)\left\lfloor\rho'_2\right\rfloor & \text{if}\  \iota_2> 0, \rho'_1< 0.
   \end{cases}
\end{align*}
\end{lemma}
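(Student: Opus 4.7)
My plan is to follow the template of Lemma \ref{iota12leq}. First, using $qr = \lambda\binom{m-1}{3}$ and $ks = \lambda\binom{n-1}{3}$, direct computation yields
\begin{align*}
8\bigl[q\rho'_1 + (k-q)\rho'_2\bigr] &= \lambda\left[(4m-n)\binom{n-1}{3} - 3m\binom{m-1}{3}\right], \\
8(k-q)\rho'_2 &= \lambda(4m-n)\left[\binom{n-1}{3} - \tfrac{s}{r}\binom{m-1}{3}\right].
\end{align*}
Since $4 \mid rm$ and $4 \mid sn$, the quantity $8\rho'_i$ is divisible by $4$, so $\rho'_i \in \tfrac{1}{2}\mathbb{Z}$; consequently the floor corrections $q\frc(\rho'_1) + (k-q)\frc(\rho'_2)$ and $(k-q)\frc(\rho'_2)$ are bounded by $k/2$ and $(k-q)/2$ respectively.

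The next step is to establish the central identity
$$(4m-n)\binom{n-1}{3} - 3m\binom{m-1}{3} - 8(n-m)\binom{m}{3} = (m-1)\binom{n-m}{2}(3m-n+2) + (4m-n)\binom{n-m}{3}$$
by expanding $\binom{n-1}{3}$ via Lemma \ref{doublcount}(b), substituting $\binom{m}{3} = \binom{m-1}{3} + \binom{m-1}{2}$, and reducing the coefficient of $\binom{m-1}{2}$ using $\binom{m-1}{3} = \tfrac{m-3}{3}\binom{m-1}{2}$. The right-hand side is visibly nonnegative whenever $n \le 2m$, which holds in both cases from $\iota_2 \ge 0$ (case 1) or $\iota_2 > 0$ (case 2). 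In case 1, after absorbing the $k/2$ correction, the inequality to check reduces to
$$(m-1)\binom{n-m}{2}(3m-n+2) + (4m-n)\binom{n-m}{3} \ge \tfrac{4}{s}\binom{n-1}{3},$$
and the hypothesis $\rho'_1 \ge 0$ forces $s(4m-n) \ge 3rm$ (so in particular $s \ge 2$), which together with $(m,r,\lambda)$-admissibility bounds the right-hand side; the remaining polynomial inequality in $m,n$ over $4m/3 \le n \le 2m$ is routine.

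For case 2, the condition $\rho'_1 < 0$ rewrites as $(s/r)(4m-n) < 3m$, and combined with $\iota_2 > 0$ (i.e.\ $n < 2m$) forces $s < 3r/2$, so Lemma \ref{s_less_3r/2,k_not_q} gives $k > q$. The strengthened identity
$$(4m-n)\left[\binom{n-1}{3} - \tfrac{s}{r}\binom{m-1}{3}\right] - 8(n-m)\binom{m}{3} = \left[3m - \tfrac{s}{r}(4m-n)\right]\binom{m-1}{3} + (m-1)\binom{n-m}{2}(3m-n+2) + (4m-n)\binom{n-m}{3}$$
has a strictly positive first summand supplying enough extra slack to absorb the $(k-q)/2$ floor correction.

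The hard part will be uniformly absorbing the floor corrections when $n$ is close to $m$: the slack of the central identity scales as $\binom{n-m}{2}$ while the correction scales as $\binom{n-1}{3}/s$, so the argument must simultaneously exploit the sign condition on $\rho'_1$ (to lower-bound $s$ by $3rm/(4m-n)$), the admissibility of $(m,r,\lambda)$ and $(n,s,\lambda)$ (to lower-bound $r$ and $s$ when $m$ is small, e.g.\ $r \ge 2$ for $m=4$), and the bound $n \ge 4m/3$ (to rule out $n=m+1$). The resulting inequality over $4m/3 \le n \le 2m$ is a two-variable polynomial amenable to Mathematica, exactly as done for the slack analysis in Lemma \ref{iota12leq}.
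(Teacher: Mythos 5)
Your proposal is correct and follows essentially the same route as the paper's proof: compute $q\rho'_1 + (k-q)\rho'_2$ (resp.\ $(k-q)\rho'_2$) in closed form, bound the floor correction by $k/2$ (resp.\ $(k-q)/2$) using $\frc(\rho'_i)\in\{0,\tfrac12\}$, then reduce to a polynomial inequality in $m,n$ verified case-by-case; your case-2 reduction via $s<3r/2$ and Lemma~\ref{s_less_3r/2,k_not_q} matches the paper's observation that $r<s<3r/2$ there. The one genuine improvement is your explicit ``central identity'' $(4m-n)\binom{n-1}{3}-3m\binom{m-1}{3}-8(n-m)\binom{m}{3}=(m-1)\binom{n-m}{2}(3m-n+2)+(4m-n)\binom{n-m}{3}$ (which I verified), which makes the nonnegativity of the uncorrected slack on $n\le 2m$ transparent; the paper instead decomposes its quantity $\alpha$ into four ad hoc pieces $A+B+C+D$ and checks each. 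Both versions still require the same separate handling of small $m$ (and of $n=2m$ in case~1), so your closing caveat about that work being the ``hard part'' is accurate rather than a gap.
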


\begin{proof}
First, let us assume that $\iota_2\geq0, \rho'_1\geq0$. In this case, $n\leq2m$. If $n<2m$, then $s>r$, and so $s\geq2$. Since $\frc(\rho'_i)\in \{0,1/2\}$ (for $i=1,2$), we have $q\frc(\rho'_1)+(k-q)\frc(\rho'_2)\leq k/2\leq \lambda \binom{n-1}{3}/4$.
We have 
%For $n$ with $4m/3\leq n\leq\min\{2m,(4-3r/s)m\}$, since $n\leq (4-3r/s)m$, we have $\rho'_1\geq0$. Therefore, we have
\begin{align*}
2q\left\lfloor \rho'_1\right\rfloor+2(k-q)\left\lfloor\rho'_2\right\rfloor&=2q\rho'_1+2(k-q)\rho'_2-2q\frc(\rho'_1)-2(k-q)\frc(\rho'_2)\\
&=ksm-\frac{3qrm}{4}-\frac{ksn}{4}-2q\frc(\rho'_1)-2(k-q)\frc(\rho'_2)\\
&=\lambda m\binom{n-1}{3}-3\lambda \binom{m}{4}-\lambda\binom{n}{4}\\
&\ \ \ -2q\frc(\rho'_1)-2(k-q)\frc(\rho'_2)\\
&\geq \lambda m\binom{n-1}{3}-3\lambda \binom{m}{4}-\lambda\binom{n}{4}-\frac{\lambda}{2} \binom{n-1}{3},
\end{align*}
and hence, to prove the first inequality, it suffices to show that
$$\lambda m\binom{n-1}{3}-3\lambda \binom{m}{4}-\lambda\binom{n}{4}-\frac{\lambda}{2} \binom{n-1}{3} \geq 2 \lambda(n-m)\binom{m}{3}.$$

 Therefore, 

\begin{align*}
\alpha&:=\frac{12}{\lambda}\left[\lambda m\binom{n-1}{3}-3\lambda \binom{m}{4}-\lambda\binom{n}{4}-\frac{\lambda}{2} \binom{n-1}{3} - 2 \lambda(n-m)\binom{m}{3}\right]\\
&= 12m\binom{n-1}{3}-36\binom{m}{4}-12\binom{n}{4}-24(n-m)\binom{m}{3}-6\binom{n-1}{3}\\
%&= -n^4+(4m+4)n^3+(- 24 m+1)n^2+( - 8 m^3+ 24 m^2+ 28 m- 16) n\\
%&\ \ \ \ +5 m^4- 6 m^3  - 17 m^2 - 6 m   + 12,
&=12\binom{m}{2}\binom{n-m}{2}-6\binom{m-1}{3}-6(n-m)\binom{m-1}{2}\\
&\ \ \ -6(m-1)\binom{n-m}{2}-6\binom{n-m}{3}-12\binom{n-m}{4}\\
&=A+B+C+D,
\end{align*}
where $A=2\binom{m}{2}\binom{n-m}{2}-2\binom{n-m}{2}\binom{n-m-2}{2}$, $B=2\binom{m}{2}\binom{n-m}{2}-6(m-1)\binom{n-m}{2}$, $C=\binom{m}{2}\binom{n-m}{2}-2(n-m-2)\binom{n-m}{2}$, and $D=7\binom{m}{2}\binom{n-m}{2}-6(n-m)\binom{m-1}{2}-6\binom{m-1}{3}$. 

Now, suppose that $m\geq10$. Since $n\geq4m/3$, we have $n-m\geq m/3\geq4$. Since $n\leq 2m$, we have $\binom{m}{2}\geq \binom{n-m}{2}$, and so $A\geq0$. Since $m\geq6$, we have $4\binom{m}{2}\geq12(m-1)$, and hence, $B\geq0$. Moreover, $n\leq 2m$ implies that $\binom{m}{2}\geq 2(n-m-2)$, which means that $C\geq 0$. Finally, %$D=14\binom{m}{2}\binom{n-m}{2}-4(3n-2m-3)\binom{m-1}{2}$. 
$$D=\frac{1}{2} (m - 1) \left(7 m n^2 +(- 14 m^2 - 19 m + 24) n+7 m^3  + 15 m^2   - 4 m  - 24\right).$$ Let $f(n)=7 m n^2 +(- 14 m^2 - 19 m + 24) n+7 m^3  + 15 m^2   - 4 m  - 24$ and $g(m)=7m^3-93m^2+252m-216$. Since $g(10)=4>0, g'(10)=492>0$, and $g''(10)=234>0$, we have $f(4m/3)=g(m)/9\geq0$, for $m\geq10$. Moreover, $f'(4m/3)=\frac{1}{3}(14m^2-57m+72)\geq0$, for $m\geq10$. Therefore, $D\geq0$, for $m\geq 10$, which implies that $\alpha\geq0$ for $m\geq10$.

%which is non-negative for $n\geq4m/3$ and $m\geq7$ by Mathematica. 

For $m=7,8,9$, we have $\alpha\geq 24m\binom{n-1}{3}-72\binom{m}{4}-24\binom{n}{4}-48(n-m)\binom{m}{3}-12\binom{n-1}{3}$, which is nonnegative for $4m/3\leq n\leq 2m$.

For $m=4$, we have $s\geq3$. For $m=5,6$, since $(m,r,\lambda)$ is admissible, we have $4 \mid rm$. Consequently,  $r\geq 2$ and so $s\geq 3$. Hence, we have
$\alpha\geq 24m\binom{n-1}{3}-72\binom{m}{4}-24\binom{n}{4}-48(n-m)\binom{m}{3}-8\binom{n-1}{3}$, which is nonnegative for $m=4,5,6, 4m/3\leq n\leq 2m$. 

If $n=2m$, then $q\frc(\rho'_1)+(k-q)\frc(\rho'_2)\leq k/2\leq \lambda \binom{n-1}{3}/2$. Therefore, we need to show that 
$$\lambda m\binom{2m-1}{3}-3\lambda \binom{m}{4}-\lambda\binom{2m}{4}-\lambda \binom{2m-1}{3} \geq 2 \lambda(2m-m)\binom{m}{3},$$
which can be simplified as the inequality $(5 m - 3) (m - 1) (m - 2) (m - 4)/24\geq0$, which holds for $m\geq4$. 

Now, let us assume that $\iota_2\geq0, \rho'_1\leq 0$. In this case, $r<s< 3r/2$, and $(4-3r/s)m< n< 2m$. Therefore, $s\geq 4$, and $s/r< 3m/(4m-n)$. It is also clear that $r\leq\lambda\binom{m-1}{3}$. Since  $\frc(\rho'_2)\in\{0,1/2\}$, we have 
$$8(k-q)\frc(\rho'_2)\leq\frac{4\lambda}{s}\binom{n-1}{3}-\frac{4\lambda}{r}\binom{m-1}{3}\leq\lambda\binom{n-1}{3}-4.$$
Therefore, 
\begin{align*}
8(k-q)\left\lfloor\rho'_2\right\rfloor&=8(k-q)\rho'_2-8(k-q)\frc(\rho'_2)\\
&=4ksm-ksn-4qsm+qsn-8(k-q)\frc(\rho'_2)\\
&=(4m-n)(ks-qs)-8(k-q)\frc(\rho'_2)\\
&=\lambda(4m-n)\left[\binom{n-1}{3}-\frac{s}{r}\binom{m-1}{3}\right]-8(k-q)\frc(\rho'_2)\\
&\geq \lambda (4m-n)\left[\binom{n-1}{3}-\frac{s}{r}\binom{m-1}{3}\right]-\lambda\binom{n-1}{3}+4.
\end{align*}
Therefore, it suffices  to show that 
$$\lambda (4m-n)\left[\binom{n-1}{3}-\frac{s}{r}\binom{m-1}{3}\right]-\lambda\binom{n-1}{3}+4\geq8\lambda (n-m)\binom{m}{3}.$$
Hence,
\begin{align*}
\alpha&:= (4m-n)\left[\binom{n-1}{3}-\frac{s}{r}\binom{m-1}{3}\right]-8(n-m)\binom{m}{3}-\binom{n-1}{3}+4/\lambda \\
&\geq (4m-n)\binom{n-1}{3}-3m\binom{m-1}{3}-8(n-m)\binom{m}{3}-\binom{n-1}{3} \\
&\geq4\binom{m}{2}\binom{n-m}{2}-(n-m)\binom{m-1}{2}-4\binom{n-m}{4}\\
&\ \ \ -\binom{m-1}{3}-(m-1)\binom{n-m}{2}-\binom{n-m}{3}\\
&=A+B,
\end{align*}
where $A=3\binom{m}{2}\binom{n-m}{2}-(n-m)\binom{m-1}{2}$ and $B=\binom{m}{2}\binom{n-m}{2} -4\binom{n-m}{4}-\binom{m-1}{3}-(m-1)\binom{n-m}{2}-\binom{n-m}{3}$. Since $3\binom{n-m}{2}\geq n-m$ and $\binom{m}{2}\geq\binom{m-1}{2}$, $A$ is non-negative. We have $B=\binom{m-1}{2}\left[3\binom{n-m}{2}-m+3\right]/3-\binom{n-m}{2}\binom{n-m-1}{2}/3$. Since $n\leq 2m-1$, we have $\binom{m-1}{2}\geq \binom{n-m}{2}$. So, it is enough to show that $3\binom{n-m}{2}-m+3\geq \binom{n-m-1}{2}$, or equivalently, $n^2-2mn+m^2-m+2\geq0$. Let $f(n)=n^2-2mn+m^2-m+2$. Since $f(4m/3)=(m-3)(m-6)/9\geq0$ and $f'(4m/3)=2m/3\geq$ for $m\geq6$, we have $B\geq0$, for $m\geq6$. For $m=4,5$, we have
$$\alpha\geq (4m-n)\binom{n-1}{3}-3m\binom{m-1}{3}-8(n-m)\binom{m}{3}-\binom{n-1}{3},$$
which is nonnegative for $m=4,5$ and $4m/3\leq n\leq 2m-1$.

\end{proof}

\begin{lemma} \label{iotaijlem}
If  all the  $u^3v$-edges are colored, then 
%$$f\geq  \sum\nolimits_{j\in\kappa_1}\iota_{1j}+\sum\nolimits_{j\in\kappa_2}\iota_{2j}.$$
\begin{align*}
f\geq  \begin{cases} 
      \sum\nolimits_{j\in\kappa_1}\iota_{1j}+\sum\nolimits_{j\in\kappa_2}\iota_{2j} \\
      \sum\nolimits_{j\in\kappa_2}\iota_{2j}  & \text{if}\ e_j=0 \ \text{for}\ j\in \kappa_1.
   \end{cases}
\end{align*}
\end{lemma}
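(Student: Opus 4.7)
The plan is to evaluate $\sum_{j\in\kappa_1}\iota_{1j}+\sum_{j\in\kappa_2}\iota_{2j}$ directly from the definitions and show it equals $f$ minus a nonnegative quantity. Expanding using the definitions of $\iota_{1j}$ and $\iota_{2j}$, and using $\sum_{j\in \kappa}e_j=e$, the sum collapses to
$$
ksm-\tfrac{ksn}{4}-\tfrac{3qrm}{4}-2e.
$$
Now I would use the identities $ks=\lambda\binom{n-1}{3}$, $qr=\lambda\binom{m-1}{3}$, $n\binom{n-1}{3}=4\binom{n}{4}$, and $m\binom{m-1}{3}=4\binom{m}{4}$ to rewrite this as
$$
\lambda m\binom{n-1}{3}-\lambda\binom{n}{4}-3\lambda\binom{m}{4}-2\lambda(n-m)\binom{m}{3}.
$$
Applying Lemma \ref{doublcount}(c) to expand $m\binom{n-1}{3}$, and then Lemma \ref{doublcount}(a) to expand $\binom{n}{4}$, all terms cancel except $\lambda\binom{m}{2}\binom{n-m}{2}-\lambda\binom{n-m}{4}=f-\lambda\binom{n-m}{4}$. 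Since $\binom{n-m}{4}\geq 0$, this immediately yields the first inequality.

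For the second inequality, assume $e_j=0$ for every $j\in\kappa_1$, so $\sum_{j\in\kappa_2}e_j=e$. Then
$$
\sum_{j\in\kappa_2}\iota_{2j}=(k-q)\bigl(sm-\tfrac{sn}{4}\bigr)-2e.
$$
Using $(k-q)s=\lambda\bigl[\binom{n-1}{3}-\tfrac{s}{r}\binom{m-1}{3}\bigr]$ (from the definitions of $k$ and $q$), this rewrites as
$$
\tfrac{\lambda}{4}(4m-n)\Bigl[\tbinom{n-1}{3}-\tfrac{s}{r}\tbinom{m-1}{3}\Bigr]-2\lambda(n-m)\tbinom{m}{3}.
$$
Thus $f\geq\sum_{j\in\kappa_2}\iota_{2j}$ is equivalent (after multiplying through by $4/\lambda$) to exactly the necessary condition (N7), i.e.\ \eqref{longnec1}. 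So this case is an immediate consequence of the standing hypothesis.

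I do not anticipate any real obstacle: the first claim is a pure identity matching $\sum\iota_{ij}$ to $f-\lambda\binom{n-m}{4}$, and the second is nothing more than a rearrangement of the hypothesis \eqref{longnec1}. The only care needed is to keep track of the two $\binom{n-1}{3}$-to-$\binom{m-1}{3}$ conversions and to apply Lemma \ref{doublcount} in the right order so the cancellations happen cleanly.
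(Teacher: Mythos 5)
Your proposal is correct and takes essentially the same route as the paper: both parts reduce to the identity $\sum_{j\in\kappa_1}\iota_{1j}+\sum_{j\in\kappa_2}\iota_{2j}=f-\lambda\binom{n-m}{4}$ (via the binomial identities of Lemma~\ref{doublcount}) for the first bound, and to a rearrangement of the necessary condition \eqref{longnec1} (i.e.\ (N7)) for the second. The paper's write-up just presents the first calculation as a chain of equalities for $f-\sum\iota_{ij}$ rather than for $\sum\iota_{ij}$ itself, and invokes (N7) without spelling out the $(k-q)s$ substitution, but the content is identical.
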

\begin{proof}
We have
\begin{align*}
&\lambda\binom{m}{2}\binom{n-m}{2}- \sum\nolimits_{j\in\kappa_1}\iota_{1j}-\sum\nolimits_{j\in\kappa_2}\iota_{2j}\\
&\qquad= \lambda\binom{m}{2}\binom{n-m}{2}-k(sm-\frac{sn}{4})+q\frac{3rm}{4}+2\lambda(n-m)\binom{m}{3}\\
&\qquad= \lambda\binom{m}{2}\binom{n-m}{2}-\lambda m\binom{n-1}{3}+\lambda\binom{n}{4}+3\lambda\binom{m}{4}+2\lambda(n-m)\binom{m}{3}\\
&\qquad= \lambda\binom{n-m}{4}\geq 0.
\end{align*}

Now suppose that $e_j=0$ for $j\in \kappa_1$. We have
\begin{align*}
&\lambda\binom{m}{2}\binom{n-m}{2}-\sum\nolimits_{j\in\kappa_2}\iota_{2j}\\
&\qquad= \lambda\binom{m}{2}\binom{n-m}{2}-(k-q)(sm-\frac{sn}{4})+2\sum\nolimits_{j\in\kappa_2}e_j \\
&\qquad= \lambda\binom{m}{2}\binom{n-m}{2}+2\lambda(n-m)\binom{m}{3}-(k-q)(sm-\frac{sn}{4})\geq 0,
\end{align*}
where the last inequality holds by (N7). 
\end{proof}

\begin{lemma} \label{rhoijlem}
 We can color the $u^3v$-edges so that
$$f\leq \sum\nolimits_{j\in\kappa_1}\rounddown{\rho_{1j}}+\sum\nolimits_{j\in\kappa_2}\rounddown{\rho_{2j}}.$$ 
\end{lemma}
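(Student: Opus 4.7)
The plan is to translate the claimed inequality into a parity-counting question about $e_j:=\mult_j(u^3v)$. Combining the definitions of $\rho_{1j},\rho_{2j}$ with the constraint $\sum_{j\in\kappa}e_j=e$ and applying Lemma \ref{doublcount}(c) yields
$$\sum_{j\in\kappa_1}\rho_{1j}+\sum_{j\in\kappa_2}\rho_{2j}=\frac{m(ks-qr)}{2}-\frac{3e}{2}=f+\frac{\lambda m}{2}\binom{n-m}{3},$$
independent of how the $u^3v$-edges are colored. Writing $B$ for the set of colors $j$ at which $\rho_{ij}$ has nonzero fractional part, the desired inequality is therefore equivalent to the integrality budget
$$|B|\leq \lambda m\binom{n-m}{3}.$$

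Next, since $\rho_{1j}=((s-r)m-3e_j)/2$ and $\rho_{2j}=(sm-3e_j)/2$, each fractional part lies in $\{0,\tfrac{1}{2}\}$ and vanishes precisely when $e_j$ has the \emph{good parity}: $e_j\equiv(s-r)m\pmod 2$ for $j\in\kappa_1$, or $e_j\equiv sm\pmod 2$ for $j\in\kappa_2$. I therefore plan to choose the $e_j$'s so that as few as possible carry the wrong parity. If every $e_j$ has good parity, then
$$\sum_{j\in\kappa}e_j\equiv q(s-r)m+(k-q)sm\equiv \lambda m\Bigl[\binom{n-1}{3}-\binom{m-1}{3}\Bigr]\equiv e+\lambda m\binom{n-m}{3}\pmod 2,$$
where the last step uses Lemma \ref{doublcount}(c). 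Hence the all-good-parity assignment is consistent with $\sum e_j=e$ exactly when $\lambda m\binom{n-m}{3}$ is even, in which case I take $|B|=0$; otherwise, $\lambda m\binom{n-m}{3}$ is an odd positive integer, and allowing a single bad-parity $e_j$ yields $|B|=1\leq \lambda m\binom{n-m}{3}$.

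In either case the integrality budget is met. The main remaining work, and where I expect the real obstacle to lie, is to exhibit explicit nonnegative integer $e_j$'s with the prescribed parity pattern, summing to $e$, while also respecting the range constraints on $e_j$ that the particular case of Section \ref{col2sec} imposes (for instance, $\iota_i\leq e_j\leq \rho_i$ in the bulk of the argument, or the tighter $\lfloor\rho_i'\rfloor\leq e_j\leq \lceil\rho_i'\rceil$ in the intermediate subcases of Sections 6.5(iii) and 6.6(iii)). When the relevant range contains at least two integers of each parity there is enough room to toggle the parity of a single $e_j$ by $\pm 1$ without disturbing the sum, so the target parity distribution is easily realised; the boundary situation $n-m\in\{1,2\}$---which would force $|B|=0$---is painless because $\binom{n-m}{3}=0$ makes $\lambda m\binom{n-m}{3}$ automatically even, so the all-good-parity sum already matches $e\pmod 2$ and the construction is unconditional.
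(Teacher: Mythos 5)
Your opening computation is correct and in fact recovers the same identity the paper uses: $\sum_{j\in\kappa}\rho_{ij}=f+\tfrac{\lambda m}{2}\binom{n-m}{3}$, and hence the target inequality is equivalent to $|B|\le\lambda m\binom{n-m}{3}$ where $|B|=\#\{j:\frc(\rho_{ij})=\tfrac12\}$. Your parity diagnosis of when $\frc(\rho_{ij})$ vanishes is also right. At this point, however, your route and the paper's diverge sharply. The paper uses the \emph{crude} bound $|B|\le k$ and then proves $\lambda m\binom{n-m}{3}\ge k$ by a sequence of polynomial estimates (and a finite case analysis, Table~\ref{smallcasestable}, for the handful of configurations where that inequality fails). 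You instead attempt the much stronger claim that the $e_j$'s can always be chosen so that $|B|\le1$, which if true would make most of the paper's casework unnecessary.

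The gap you acknowledge---compatibility of the parity pattern with the case-specific range constraints on the $e_j$'s---is real and not a minor loose end. In case 5.4 the argument forces $e_j=0$ for every $j\in\kappa_1$ (the coloring must satisfy $e\le(k-q)\lfloor\rho'_2\rfloor$ from Lemma~\ref{lemu3vrhs}, which uses the full budget on $\kappa_2$), and if $(s-r)m$ is odd this puts \emph{all} $q$ colors of $\kappa_1$ into the bad-parity set, so $|B|\ge q$. Concretely, $m=9$, $r=4$, $s=5$, $n=16$, $\lambda=1$ lands in case 5.4 with $q=14$, $(s-r)m=9$ odd, and $e_j=0$ for $j\in\kappa_1$, giving $|B|\ge14$; here $\lambda m\binom{n-m}{3}=315\ge k=91\ge|B|$ so the inequality survives, but your $|B|\le1$ budget does not. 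Similarly, in cases 5.5(iii) and 5.6(iii) the range is $\lfloor\rho'_i\rfloor\le e_j\le\lceil\rho'_i\rceil$, which may contain a single integer whose parity you cannot choose. You would need to show either that (a) one can retune the $e_j$'s while still meeting the sum target $e$ and the subcase's upper/lower bounds, or (b) that whenever the parities are forced, enough slack exists in $\lambda m\binom{n-m}{3}$ to absorb the resulting $|B|$. The paper sidesteps this by using the weaker $|B|\le k$ and grinding out $\lambda m\binom{n-m}{3}\ge k$, falling back on an explicit table of assignments for the finitely many $(m,n,r,s)$ where even that fails. Your approach would be cleaner if it worked, but as written it asserts what it should prove.

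Two smaller points. First, your last paragraph claims the boundary $n-m\in\{1,2\}$ is ``painless'' because $\binom{n-m}{3}=0$ makes the parity-of-$e$ obstruction vanish; but in that regime the budget is $|B|\le0$, i.e.\ you must realize \emph{every} $e_j$ with good parity, which is exactly the hardest case for your range-compatibility argument, not the easiest. (It happens to work because when $\sum\rho_{ij}=f$ with $f\le\sum\rho_{ij}$ forced anyway, every $\rho_{ij}$ is an integer for any feasible coloring, but that needs to be said.) Second, the lemma is invoked inside each subcase of Section~\ref{col2sec} \emph{after} the $e_j$'s are already pinned down by other inequalities, so a proof of Lemma~\ref{rhoijlem} must be consistent with those earlier constraints; your proposal implicitly treats the $e_j$'s as free, which they are not.
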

\begin{proof}
We prove something stronger. We show that in most cases as long as  all the  $u^3v$-edges are colored, the above inequality holds. In the remaining few cases, we color the $u^3v$-edges carefully ensuring that certain conditions are met  (see Table \ref{smallcasestable}).  Since $(m,r,\lambda)$ is admissible, $4 \mid rm$. Therefore, 
\begin{equation*}
\frc(\rho_{ij})=\left\{\!\begin{aligned}
 0 & \ \ {\text{if }}  sm+\mult_j(u^3v) \equiv 0 \Mod 2 \\
 \frac{1}{2} & \ \ \text{if } sm+\mult_j(u^3v) \equiv 1 \Mod 2
\end{aligned}\right\}
\quad j\in \kappa_i, i=1,2.
\end{equation*}
Let $\beta=\sum\nolimits_{j\in\kappa_1}\frc(\rho_{1j})+\sum\nolimits_{j\in\kappa_2}\frc(\rho_{2j})$. Then 
$$2\beta= |\{j\in \kappa:sm+\mult_j(u^3v) \equiv 1 \Mod 2 \}|\leq k$$
We have
\begin{align*}
&2\sum\nolimits_{j\in\kappa_1}\lfloor \rho_{1j}\rfloor+2\sum\nolimits_{j\in\kappa_2}\lfloor \rho_{2j}\rfloor-2\lambda\binom{m}{2}\binom{n-m}{2}\\
&\qquad=2\sum\nolimits_{j\in\kappa_1} \rho_{1j}+2\sum\nolimits_{j\in\kappa_2}\rho_{2j}-2\lambda\binom{m}{2}\binom{n-m}{2}-2\beta\\
&\qquad=qm(s-r)+(k-q)sm-3\lambda(n-m)\binom{m}{3}-2\lambda\binom{m}{2}\binom{n-m}{2}-2\beta\\
&\qquad= ksm-qrm-3\lambda(n-m)\binom{m}{3}-2\lambda\binom{m}{2}\binom{n-m}{2}-2\beta\\
&\qquad= \lambda m\left[\binom{n-1}{3}-\binom{m-1}{3}\right]-3\lambda(n-m)\binom{m}{3}-2\lambda\binom{m}{2}\binom{n-m}{2}-2\beta\\
&\qquad= \lambda m\binom{n-m}{3}-2\beta.
%&\qquad\geq  \lambda m\binom{n-m}{3}-k. 
\end{align*}

First, let us assume that $n\geq3m$. We have

\begin{align*}
\alpha&:=\frac{3}{\lambda}\left[\lambda m\binom{n-m}{3}-2\beta\right]\\
&\geq \frac{3}{\lambda}\left[\lambda m\binom{n-m}{3}-k\right]\\
&\geq 3m\binom{n-m}{3}-3\binom{n-1}{3}\\
&=3m\binom{n-m}{3}-3\binom{m-1}{3}-3(n-m)\binom{m-1}{2}-3(m-1)\binom{n-m}{2}-3\binom{n-m}{3}\\
&=(m-1)(n-m-5)\binom{n-m}{2}-(4m-3n-3)\binom{m-1}{2}.
\end{align*}

Since $n\geq3m$, we have $\binom{n-m}{2}\geq\binom{m-1}{2}$ and $(m-1)(n-m-5)\geq 4m-3n-3$. This implies $\alpha\geq0$.

Now, let us assume that $4m/3\leq n\leq 3m, s\geq2,\ \text{and}\ m\geq38$. Since $s\geq2$, we have $2k\leq \lambda\binom{n-1}{3}$. Therefore,

\begin{align*}
4\alpha&\geq 12m\binom{n-m}{3}-6\binom{n-1}{3}\\
&=12m\binom{n-m}{3}-6\left[\binom{m-1}{3}+(n-m)\binom{m-1}{2}+(m-1)\binom{n-m}{2}+\binom{n-m}{3}\right]\\
&=6(2m-1)\binom{n-m}{3}-6\binom{m-1}{3}-6(n-m)\binom{m-1}{2}-6(m-1)\binom{n-m}{2}\\
&\geq (m-1)\left(2n^3-3(2m+3)n^2+(6m^2+15m+13)n-2m^3-7m^2-8m-6\right).
\end{align*}

Let $f(n)=2n^3-3(2m+3)n^2+(6m^2+15m+13)n-2m^3-7m^2-8m-6$, $g(m)=2m^3-81m^2+252m-162$, and $h(m)= 2 m^3 - 13 m^2 + 18 m - 6$. Since $g(38)=2194>0,g'(38)=2760>0,g''(38)=294>0$, we have that $g(m)>0$ for $m\geq 38$, and so, $f(4m/3)=g(m)/27>0$ for $m\geq38$. Moreover, 
$f'(4m/3)=(2m^2-27m+39)/3>0$ and $f''(4m/3)=4m-18>0$ for $m\geq 38$. We conclude that $f(n)\geq0$ for $n\geq4m/3$ and $m\geq38$. Thus, $\alpha\geq 0$ for $m\geq 38, n\geq 4m/3$. 

Now, let us assume that $4m/3\leq n\leq3m,r=s=1,\ \text{and}\ m\geq29$. Since $r=s$, we have $n\geq2m$. Therefore, 

\begin{align*}
\alpha&\geq \frac{6}{\lambda}\left[\lambda m\binom{n-m}{3}-k\right]\\
&\geq 6m\binom{n-m}{3}-6\binom{n-1}{3}\\
&\geq 6m\binom{m}{3}-6\binom{3m-1}{3}\\
&=(m-1)[m^2(m-2)-3(3m-1)(3m-2)]\\
&\geq(m-1)[m^2(m-2)-9m(3m-2)]\\
&=m(m-1)(m^2-29m+18)\\
&\geq0.
\end{align*}

For the remaining  cases, we refer the reader to  Table \ref{smallcasestable}.

\end{proof}
\begin{table}[h!] %\label{smallcasestable} 
\centering
 \begin{adjustbox}{angle=90}
\begin{tabular}{|c|c|c|c|c|c|c|c|c|c|c|c|c|c|c|c|c|c|c|}
\hline
\multicolumn{1}{|c|}{\multirow{2}{*}{$m$}} & \multicolumn{1}{c|}{\multirow{2}{*}{$n$}} & \multicolumn{1}{c|}{\multirow{2}{*}{$r$}} & \multicolumn{1}{c|}{\multirow{2}{*}{$s$}} & \multicolumn{1}{c|}{\multirow{2}{*}{$q$}} & \multicolumn{1}{c|}{\multirow{2}{*}{$k$}} & \multicolumn{1}{c|}{\multirow{2}{*}{$k-q$}} & \multicolumn{1}{c|}{\multirow{2}{*}{$\iota_1$}} & \multicolumn{1}{c|}{\multirow{2}{*}{$\left\lfloor\rho'_1\right\rfloor$}}& \multicolumn{1}{c|}{\multirow{2}{*}{$\left\lfloor\rho_1\right\rfloor$}} & \multicolumn{1}{c|}{\multirow{2}{*}{$\iota_2$}} & \multicolumn{1}{c|}{\multirow{2}{*}{$\left\lfloor\rho'_2\right\rfloor$}} & \multicolumn{1}{c|}{\multirow{2}{*}{$\left\lfloor\rho_2\right\rfloor$}} & \multicolumn{1}{c|}{\multirow{2}{*}{$e$}} & \multicolumn{1}{c|}{\multirow{2}{*}{$f$}} & \multicolumn{1}{c|}{\multirow{2}{*}{$g$}} & \multicolumn{1}{c|}{\multirow{2}{*}{Case}} & \multicolumn{2}{c|}{$e_j$}\\
 \cline{18-19}
 \multicolumn{1}{|c|}{} &&&&&&&&&&&&&&&&&$j\in\kappa_1$&$j\in\kappa_2$\\
 \hline
5& 8& 4& 5& 1&  7&6& -5&  0&1& 5& 7 & 8 & 30& 30& 5&5.3&$0^1$&$5^6$\\
\hline
6 & 8 & 2 & 5 & 5 & 7  &2 & 4 & 5 & 6 & 10 & 10 & 10 & 40 & 15 & 0 & 5.2       & $4^5$ & $10^2$\\
\hline
6 & 9 & 2 & 4 & 5 & 14 &9& 0 & 3 & 4 & 6 & 7 & 8 & 60 & 45 & 6 & 5.2 & $0^2,2^3$ & $6^9$ \\
\hline
6& 9 & 2 & 8 & 5 & 7 &2& 6 & 10 & 12 & 12 & 15 & 16 & 60 & 45 & 6 & 5.2      & $6^2,8^3$ & $12^2$\\
\hline
8 & 12 & 1 & 3 & 35 & 55 &20& 2 & 4 & 5 & 6 & 7 & 8 & 224 & 168 & 32 & 5.2 & $2^{18},4^{17}$ & $6^{20}$\\
\hline
8 & 16 & 1 & 1 & 35 & 455 & 420 & -4 & -1 & 0 & 0 & 2 & 2 & 448 & 784 & 448 &5.6(i)&$0^{35}$ & $0^{196},2^{224}$\\
\hline
8 & 11 & 5 & 8 & 7 & 15 & 8 & 0 & 6 & 8 & 20 & 21 & 21 & 168 & 84 & 8 &5.2&$0^3,2^4$&$20^8$\\
\hline
8 & 11 & 5 & 12 & 7 & 10 & 3 & 10 & 16 & 18 & 30 & 31 & 32 & 168 & 84 & 8 & 5.2 &$10^3,12^4$ & $30^3$\\
\hline
8 & 11 & 7 & 12 & 5 & 10 & 5& 2 & 10 & 13 & 30 & 31 & 32 & 168 & 84 & 8 & 5.2 & $2^1,4^4$ & $30^5$ \\
\hline
9& 12& 4& 11 & 14 & 15& 1&15 & 19 & 21 & 33 & 33 & 33 & 252 & 108 & 9&5.2&$15^{11},18^3$&$33^1$\\
\hline
9 & 12 & 8& 15 & 7 & 11& 4&9& 18& 21& 45&  45 & 45 & 252 & 108 & 9&5.2&$9^6,18^1$&$45^4$\\
\hline
12&  18&  1 & 2 & 165 & 340& 175 & 0& 3& 4& 6& 7& 8& 1320 & 990 & 240&5.2&$0^{43},2^{121},3^1$&$6^{150},7^{25}$\\
\hline
12& 16&  3 & 5&  55&  91& 36 & 2 & 6& 8 & 20 & 20&  20&  880 & 396 &  48&5.2&$2^{30},4^{25}$&$20^{36}$\\
\hline
12&  16&  3& 7&  55&  65& 10 & 10&  14& 16&  28&  28 & 28&  880 & 396 & 48&5.2&$10^{30},12^{25}$&$28^{10}$\\
\hline
14& 19 & 2 & 4& 143 &204 & 61 & 4& 8&  $9$ & 18 & 18 & 18 & 1820 & 910 &140&5.2&$4^{68},6^{75}$&$18^{61}$\\
\hline
14&  20&  2&  3&  143&  323 & 180 & -2& 3&  4 & 12& 13& 14 & 2184 & 1365 & 280&5.3&$0^{131},2^{12}$&$12^{180}$\\
\hline
16 & 22&  1&  2&  455&  665& 210 & 2 & 4 & 5 & 10&  10 & 10& 3360 & 1800 &  320&5.2&$2^{280},4^{175}$&$10^{210}$\\
\hline
28 & 38&  1&  2&  2925 & 3885 & 960 & 4 & 8& 9 & 18 & 18 &  18& 32760 & 17010 &  3360&5.2&$4^{1035},6^{1890}$&$18^{960}$\\
\hline
 5&  7 & 4& 20 & 1&  1&0& 20 & 25 & 26 &NA&NA&NA& 20& 10& 0&5.2&$20^1$&NA\\
\hline
 6& 8& 2& 7& 5& 5& 0& 8&  9 &10 &NA&NA&NA& 40&  15&  0&5.2&$8^5$&NA\\
\hline
6& 8& 10&  35&  1&  1&0&  40& 47&  50 &NA &NA&NA&40& 15& 0&5.2&$40^1$&NA\\
\hline
\end{tabular}
 \end{adjustbox}
\vspace{.1cm}
\caption{}
 \label{smallcasestable}
\end{table}

\begin{lemma}\label{M2Case3.3.3k-q+M2Case3.3.3k}
If $\iota_2\leq0$ and  $\rho'_2\geq0$, then $f\geq k$.
\end{lemma}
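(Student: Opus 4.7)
The plan is to unwrap the two inequalities $f\ge k$ and then to verify the resulting polynomial inequality on the closed interval forced by the hypotheses. Since $f=\lambda\binom{m}{2}\binom{n-m}{2}$ and $k=\lambda\binom{n-1}{3}/s$, cancelling $\lambda$ reduces the claim to
\[
s\binom{m}{2}\binom{n-m}{2}\ \ge\ \binom{n-1}{3}.
\]
The hypotheses translate directly: $\iota_2\le 0$ says $n\ge 2m$, and $\rho'_2\ge 0$ says $n\le 4m$, so throughout the argument we work on $2m\le n\le 4m$.

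First I would reduce to checking $s=1$ whenever $m\ge 5$ (since $s\ge 1$ only helps) and $s=2$ when $m=4$ (valid by the standing assumption that $r\ge 2$, hence $s\ge r\ge 2$, when $m=4$). For the main case $m\ge 5$, set $j:=n-m\in[m,3m]$ and consider
\[
\varphi(j):=3m(m-1)\,j(j-1)-2(j+m-1)(j+m-2)(j+m-3),
\]
so the target inequality is $\varphi(j)\ge 0$. Expanding, $\varphi$ is a cubic in $j$ with leading coefficient $-2$, hence has at most one local minimum and one local maximum, with the local minimum lying to the left of the local maximum. A short computation gives
\[
\varphi'(m)=6m^3-33m^2+51m-22>0\quad\text{and}\quad \varphi'(3m)=18m^3-117m^2+99m-22<0
\]
for $m\ge 5$, which places the interior local maximum inside $(m,3m)$ and pushes the local minimum to the left of $m$. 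Consequently $\min_{j\in[m,3m]}\varphi(j)=\min\{\varphi(m),\varphi(3m)\}$, so it is enough to verify the two endpoints. Here $\varphi(m)=(m-1)(3m^3-19m^2+32m-12)$, and the cubic factor is positive for $m\ge 4$ (positive at $m=4$ with positive derivative on $[4,\infty)$). Similarly $\varphi(3m)=27m^4-164m^3+201m^2-88m+12$, which I would show is nonnegative for $m\ge 5$ by evaluating at $m=5$ and showing the first derivative is positive at $m=5$ and the higher derivatives are positive throughout, so the quartic is monotone increasing on $[5,\infty)$.

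For the remaining case $m=4$, $s\ge 2$, the claim becomes $12\binom{n-4}{2}\ge\binom{n-1}{3}$ for $8\le n\le 16$, i.e.\ $-n^3+42n^2-335n+726\ge 0$. The derivative $-3n^2+84n-335$ is a downward parabola whose maximum sits at $n=14$ and which is positive at both $n=8$ and $n=16$; together with positivity at $n=8$, this gives monotonicity and the desired bound.

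The only real obstacle is bookkeeping: the inequality is close to tight at $n=2m$ when $m=4$, which is precisely why the convention $r\ge 2$ (hence $s\ge 2$) for $m=4$ is needed. Otherwise the argument is a single polynomial shape analysis plus two endpoint checks.
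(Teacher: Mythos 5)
Your proof follows essentially the same route as the paper's: reduce $f\ge k$ to the polynomial inequality $s\binom{m}{2}\binom{n-m}{2}\ge\binom{n-1}{3}$ on $2m\le n\le 4m$, drop to $s=1$ (or $s=2$ when $m=4$), observe the resulting expression is a cubic in $n$ with negative leading coefficient, and settle it by endpoint values plus a monotonicity/shape argument. Your change of variable $j=n-m$ is cosmetic. One genuine (small) improvement: by showing $\varphi(3m)=27m^4-164m^3+201m^2-88m+12\ge 0$ directly for $m\ge 5$, you absorb $m=5$ into the main case, whereas the paper treats $m=5$ separately using the divisibility-forced bound $s\ge 4$.

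However, there is a factual error in the shape analysis. You assert
\[
\varphi'(3m)=18m^3-117m^2+99m-22<0\quad\text{for }m\ge 5,
\]
but this is only true at $m=5$ (where the value is $-202$); already at $m=6$ the value is $+248$, and it is positive for all $m\ge 6$ (the paper itself checks this at $m=6$ and uses its positivity). So for $m\ge 6$ the local maximum of $\varphi$ lies to the \emph{right} of $j=3m$, not inside $(m,3m)$ as you claim; $\varphi$ is actually monotone increasing on $[m,3m]$ there. This does not break your conclusion — since $\varphi'(m)>0$ alone forces the local minimum of the cubic to lie left of $j=m$, the minimum over $[m,3m]$ is at an endpoint regardless of the sign of $\varphi'(3m)$ — but the stated reasoning and the claimed sign are wrong, and the argument should be rephrased to rely only on $\varphi'(m)>0$ (and then the two endpoint evaluations, both of which you compute correctly).
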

\begin{proof}
 Since $\iota_2\leq0$ and $\rho'_2\geq0$, we have $2m\leq n\leq4m$. Now, we have 
\begin{align*}
\frac{12}{\lambda}\left[\lambda\binom{m}{2}\binom{n-m}{2}-k\right]&=12\binom{m}{2}\binom{n-m}{2}-\frac{12}{s}\binom{n-1}{3}\\
&\geq12\binom{m}{2}\binom{n-m}{2}-12\binom{n-1}{3}\\
&=-2 n^3+( 3 m^2 - 3 m+12) n^2 +(-6 m^3+ 3 m^2 + 3 m-22) n\\
&\ \ \  +3 m^4-3 m^2  + 12.
\end{align*}
%12*binom(n-1,3)-12*binom(m,2)*binom(n-m,2)>0,2m<=n<=4m,m>=5
Let $f(n)=-2 n^3+( 3 m^2 - 3 m+12) n^2 +(-6 m^3+ 3 m^2 + 3 m-22) n +3 m^4-3 m^2  + 12, g(m)=3 m^2 - 13 m + 6, h(m)=27 m^3 - 164 m^2 + 201 m - 88, k(m)=6 m^3 - 33 m^2 + 51 m - 22, l(m)=18 m^3 - 117 m^2 + 99 m - 22$. Since $g(6)=36>0, g'(6)=23>0, h(6)=1046>0, h'(6)=1149>0, h''(6)=644>0, k(6)=392>0, k'(6)=303>0, k''(6)=150>0, l(6)=248>0, l'(6)=639>0, l''(6)=414>0$, we have $f(2m)=(m-1)(m-2)g(m)\geq0, f'(2m)=k(m)\geq0,  f(4m)=27m^4-164m^3+201m^2-88m+12>mh(m)\geq0,\ \text{and}\ f'(4m)=l(m)\geq0$ for $m\geq6$.

If $m=5$, then by \eqref{divcond+}, $4\mid 5r$. Hence, $r\geq4$, which implies $s\geq4$. Moreover, if $m=4$, we have $r\geq2$, which implies $s\geq2$. Hence, we have

\begin{align*}
\frac{12}{\lambda}\left[\lambda\binom{m}{2}\binom{n-m}{2}-k\right]&\geq 12\binom{m}{2}\binom{n-m}{2}-6\binom{n-1}{3}\\
&=
	\begin{cases}
	-n^3+42n^2-335n+726 & \text{if}\ m=4\\
	-n^3+66n^2-671n+1806 & \text{if}\ m=5.
	\end{cases}
\end{align*}
Let $f(n)=-n^3+42n^2-335n+726, g(n)=-n^3+66n^2-671n+1806$. Since $f(8)=222>0, f'(8)=145>0, f(16)=2022>0, f'(16)=241>0, g(10)=696>0, g'(10)=349>0,g(20)=6786>0,g'(20)=1849>0$, we have $\lambda\binom{m}{2}\binom{n-m}{2}-k\geq0$, for $m=4,5, 2m\leq n\leq 4m$. This completes the proof.

\end{proof}

\section*{Acknowledgement}
 The authors wish to thank Anna Johnsen, Lana K\"uhle, Stefan Napirata, Songling Shan, and  Lisa Zyga for their  feedback on various aspects of this paper.

\bibliographystyle{plain}
\bibliography{rKmh4rs}
\end{document}